\newtheorem{thm}{Theorem}[section]
\newtheorem{lem}[thm]{Lemma}
\newtheorem{defi}[thm]{Definition}
\newtheorem{remark}[thm]{Remark}
\newtheorem{example}[thm]{Example}
\newtheorem{pb}[thm]{Problem}
\newtheorem{theoA}{Theorem}
\newtheorem{coroA}[theoA]{Corollary}
\newtheorem{remA}[theoA]{Remark}
\newenvironment{rkA}{\begin{remA}\rm}{\end{remA}}
\newenvironment{rk}{\begin{remark}\rm}{\end{remark}}
\numberwithin{equation}{section}
\newcommand{\real}{{\mathbb R}}
\newcommand{\ent}{{\mathbb Z}}
\newcommand{\un}{{\mathds {1}}}
\newcommand{\A}{{\mathcal A}}
\newcommand{\I}{{\mathcal I}}
\newcommand{\K}{{\mathcal K}}
\renewcommand{\L}{{\mathcal L}}
\newcommand{\R}{{\mathcal R}}
\renewcommand{\S}{{\mathcal S}}
\newcommand{\V}{\mathcal{V}}
\renewcommand{\a}{\alpha}
\renewcommand{\b}{\beta}
\renewcommand{\d}{\delta}
\newcommand{\f}{\varphi}
\renewcommand{\l}{\lambda}
\renewcommand{\O}{\Omega}
\renewcommand{\o}{\omega}
\newcommand{\s}{\sigma}
\newcommand{\8}{\infty}
\newcommand{\el}{\ell}
\newcommand{\wt}{\widetilde}
\newcommand{\n}{\noindent}
\newcommand{\be}{\begin{eqnarray*}}
\newcommand{\ee}{\end{eqnarray*}}
\newcommand{\beq}{\begin{equation}}
\newcommand{\eeq}{\end{equation}}
\newcommand{\beqn}{\begin{equation*}}
\newcommand{\eeqn}{\end{equation*}}
\newcommand{\cqd}{\hfill$\Box$}
\begin{document}

\title[Weighted variation inequalities]{Weighted variation inequalities for differential operators and singular integrals in higher dimensions}

\thanks{{\it 2000 Mathematics Subject Classification:} Primary: 42B20, 42B25. Secondary: 46E30}
\thanks{{\it Key words:} Variation inequalities,  $A_p$ weights,  differential operators, singular integrals, vector-valued variation inequalities}

\author{Tao Ma}
\address{School of Mathematics and Statistics, Wuhan University, Wuhan 430072, China }
\email{tma.math@whu.edu.cn}

\author{Jos\'e Luis Torrea}
\address{Departamento de Matem\'aticas and ICMAT-CSIC-UAM-UCM-UC3M, Facultad de Ciencias, Universidad
Aut\'onoma de Madrid, 28049 Madrid, Spain }
\email{joseluis.torrea@uam.es}

\author{Quanhua Xu}
\address{School of Mathematics and Statistics, Wuhan University, Wuhan 430072, China and Laboratoire de Math{\'e}matiques, Universit{\'e} de Franche-Comt{\'e},
25030 Besan\c{c}on Cedex, France}
\email{qxu@univ-fcomte.fr}

\date{}
\maketitle

\begin{abstract}
 We prove weighted $q$-variation inequalities with $2<q<\infty$ for differential and singular integral operators in higher dimensions. The vector-valued extensions of these inequalities are also given.
 \end{abstract}

\bigskip


\section{Introduction and results}


We pursue our investigation of weighted variation inequalities for differential operators and singular integrals. The one dimensional case has been studied in our previous article \cite{MTX}. In the present one we consider the higher dimensional case. We show that most results of \cite{MTX} extend to higher dimensions. However, the arguments in $\real^d$ with $d\ge2$ are more complicated than those in the case of $d=1$. This is particularly true for the weighted weak type $(1, 1)$ inequalities. Their proofs require a very careful geometrical analysis of the kernels in consideration. On the other hand, one-sided  weighted variation inequalities for one-sided differential operators were obtained in \cite{MTX}. However, at the time of this writing, it is not clear for us how to show their higher dimensional extensions.

\medskip

Variation inequalities have been the subject of numerous recent research papers  in probability, ergodic theory and harmonic analysis.  The first variation inequality was proved by L\'epingle \cite{Le} for martingales (see also \cite{PX} for a different approach and related results). Bourgain \cite{Bour} proved the variation inequality  for the ergodic averages of a dynamic system. Bourgain's work has been considerably improved by subsequent works and largely extended to many other operators in ergodic theory (see, for instance, \cite{jrw-high, kaufman, LMX}) and harmonic analysis  (cf. e.g., \cite{CJRW2, DMC, GT, jsw,  MTo, OSTTW}).

\medskip

To state our results we need to recall some definitions. Let $1\le q<\8$ and $a=\{a_t\}_{t>0}$ be a family of complex numbers. The $q$-variation of $a$ is defined to be
 $$
 \|a\|_{v_q} =\sup\big(\sum^\infty_{j=0}|a_{t_j}-a_{t_{j+1}}|^q\big)^{1/q},
 $$
where the supremum runs over all increasing sequences $\{t_j\}$ of positive numbers. Let $v_q$ denote the space of all functions on $(0,\,\8)$ with finite $q$-variation. This is a Banach space modulo constant functions. The norm (or more precisely, seminorm)  $\|\,\|_{v_q}$ will be also denoted by $\V_q$: $\V_q(a)=\|a\|_{v_q}$.

Our first result concerns singular integral operators. Let $K$ be a kernel on $\real^d\times\real^d\setminus\{(x, x):x\in\real^d\}$. We will suppose that $K$ satisfies the following regularity conditions. There exist two constants $\d>0$ and $C>0$ such that
 \begin{itemize}
 \item[$({\rm K}_0$):] $\displaystyle |K(x, y)|\le \frac{C}{|x-y|^d}$ for $x\neq y$;
 \item[$({\rm K}_1$):] $\displaystyle  |K(x, y)-K(z,y)|\le \frac{C|x-z|^\d}{|x-y|^{d+\d}}$ for $|x-y|>2|x-z|$;
 \item[$({\rm K}_2$):] $\displaystyle |K(y, x)-K(y,z)| \le \frac{C|x-z|^\d}{|x-y|^{d+\d}}$ for $|x-y|>2|x-z|$.
 \end{itemize}
 By a slight abuse of notation, we will also use $K$ to denote the associated singular integral operator:
 $$K(f)(x)=\int_{\real^d}K(x, y)f(y)dy,\quad x\in\real^d.$$
For any $t>0$ let $K_t$ be  the truncated operator:
 $$K_t(f)(x)=\int_{|x-y|>t}K(x, y)f(y)dy.$$
Let $\K(f)(x)=\{K_t(f)(x)\}_{t>0}$. Thus $\K$ is an operator mapping functions on $\real^d$ to families of functions on $\real^d$. We will consider the $q$-variation of $\K f$ (relative to the variable $t$):
 $$\V_q\K (f)(x)=\|\K (f)(x)\|_{v_q}.$$
Thus the operator  $\V_q\K$ sends functions on $\real^d$ to nonnegative functions on $\real^d$.

\medskip

We next recall the definition of $A_p$ weights. Let $w$ be a positive  function on  $\real^d$.
\begin{enumerate}[$\bullet$]
 \item  $w\in A_p$ (with $1<p<\8$) if
 $$
 \sup_Q\frac1{|Q|}\int_{Q} w(x)dx \,\big(\frac1{|Q|}\int_{Q} w(x)^{-\frac{1}{p-1}}dx\big)^{p-1} <\8,$$
where the supremum runs over all cubes  in $\real^d$; All cubes in this paper are assumed to be open and with sides parallel to the axes.
 \item   $w\in A_1$ if
 $M(w)\le Cw$
for some constant $C$.
 \end{enumerate}
Here $M(f)$ denotes the usual Hardy-Littlewood maximal function of a locally integrable function:
 $$M(f)(x)=\sup_{x\in Q}\frac1{|Q|}\int_{Q} |f(y)|dy.$$
Muckenhoupt's celebrated characterization of $A_p$ weights  asserts that $w\in A_p$ if and only if the operator $M$ is bounded on $L^p(\real^d, w)$ for $1<p<\8$, and $w\in A_1$ if and only if  $M$ maps $L^1(\real^d, w)$ to $L^{1, \8}(\real^d, w)$. We refer to \cite{gar-rubio} for more information.

As usual, $f^{\sharp}$ denotes the sharp maximal function of $f$:
 $$f^{\sharp}(x)=\sup_{x\in Q}\frac1{|Q|}\Big|\int_{Q} f(x)-\frac1{|Q|}\int_{Q} f(y)dy\Big|dx.$$
The space $BMO(\real^d)$ consists of all $f$ such that $f^{\sharp}\in L^\8(\real^d)$ equipped with $\|f\|_*=\|f^{\sharp}\|_\8$.

\medskip

We are now ready to state the first result of the paper. The symbol $A\lesssim B$ will mean an inequality up to a constant that may depend on the indices $p, q, d$, the constants $C, \d$ in $({\rm K}_0)$-$({\rm K}_2$), the weights $w$, etc. but never on the functions $f$ on $\real^d$ or the points $x\in\real^d$ in consideration.

\begin{theoA}\label{vq CZ}
 Let $K$ be a  kernel on $\real^d$ satisfying $({\rm K}_0)$-$({\rm K}_2)$, and let $2<q<\8$. Assume that the operator $\V_q\K$ is of type $(p_0,p_0)$ for some $1<p_0<\8$:
  $$\int_{\real^d}\big(\V_q\K (f)(x)\big)^{p_0}dx\lesssim \int_{\real^d}|f(x)|^{p_0}dx,\quad\forall\; f\in L^{p_0}(\real^d).$$
Then
 \begin{enumerate}[\rm(i)]
 \item for $w\in A_1$
  $$w\big(\big\{x\in\real^d:\V_q\K (f)(x)>\l\big\}\big)\lesssim\frac1\l \int_{\real^d}|f(x)|w(x)dx,\quad\forall\; f\in L^1(\real^d, w),\;\forall\; \l>0;$$
 \item for $1< p<\8$ and $w\in A_p$
  $$\int_{\real^d}\big(\V_q\K (f)(x)\big)^pw(x)dx\lesssim \int_{\real^d}|f(x)|^pw(x)dx,\quad\forall\; f\in L^p(\real^d, w).$$
  \item for any weight $w$ such that $w^{-1}\in A_1$
  $$\big\|\big(\V_q\K (f)\big)^{\sharp}w\big\|_\8\lesssim \big\|fw\big\|_\8, \quad\forall\; f\in L^\8_c(\real^d),$$
 where $L^\8_c(\real^d)$ denotes the space of bounded measurable functions with compact support. In particular, $\V_q\K$ is bounded from $L^\8_c(\real^d)$ to $BMO(\real^d)$.
   \end{enumerate}
\end{theoA}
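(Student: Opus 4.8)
The plan is to follow the classical Calderón–Zygmund machinery, treating $\V_q\K$ as a sublinear operator, but with care taken because of the vector-valued (variation) nature of the target. The starting hypothesis is that $\V_q\K$ is bounded on $L^{p_0}$; the key technical point to establish first is a Cotlar-type pointwise estimate, namely a good-$\lambda$ or sharp-function inequality controlling $(\V_q\K f)^\sharp$ by the Hardy–Littlewood maximal function $M f$ (or $M(|f|^r)^{1/r}$ for some $r>1$). Concretely, for a cube $Q$ containing $x$, one splits $f = f_1 + f_2$ with $f_1 = f\chi_{2Q}$ and $f_2 = f\chi_{(2Q)^c}$; the term with $f_1$ is handled by the $L^{p_0}$ boundedness together with Kolmogorov's inequality, while for $f_2$ one must show that, for $y,z\in Q$, the quantity $\V_q\K(f_2)(y) - \V_q\K(f_2)(z)$ (or rather its oscillation) is controlled by $M f(x)$. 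Here one uses the subadditivity $\bigl|\,\|a\|_{v_q} - \|b\|_{v_q}\,\bigr|\le \|a-b\|_{v_q}$ and the regularity conditions $({\rm K}_1)$–$({\rm K}_2)$: for $y,z$ in $Q$ and $t$ larger than the diameter of $2Q$, the difference $K_t(f_2)(y) - K_t(f_2)(z)$ is estimated by the standard Hörmander-type bound, and the $v_q$-seminorm of the remaining small-$t$ piece is absorbed because $2Q$ has been excised. This yields $(\V_q\K f)^\sharp(x)\lesssim M f(x)$ pointwise.

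Granting that sharp-function bound, part (ii) for $1<p<\infty$ and $w\in A_p$ follows from the Fefferman–Stein inequality $\|g\|_{L^p(w)}\lesssim \|g^\sharp\|_{L^p(w)}$ (valid for $w\in A_\infty\supset A_p$, once one knows $\V_q\K f\in L^p(w)$ a priori, which is arranged by a standard truncation/approximation argument starting from nice $f$) combined with the Muckenhoupt boundedness of $M$ on $L^p(w)$. Part (i), the weighted weak type $(1,1)$ for $w\in A_1$, is the heart of the matter: one performs a Calderón–Zygmund decomposition of $f$ at height $\lambda$ relative to the weight $w$ (or relative to Lebesgue measure, then exploiting the $A_1$ condition $M w\lesssim w$), writing $f = g + b$ with $b = \sum_i b_i$ supported on disjoint cubes $Q_i$. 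The good part $g\in L^{p_0}$ is controlled by the assumed $(p_0,p_0)$ bound and Chebyshev. For the bad part one estimates $\V_q\K(b_i)$ away from a dilate $Q_i^*$ of $Q_i$, using the cancellation $\int b_i = 0$ together with the kernel regularity $({\rm K}_1)$; the $q$-variation enters through the need to bound $\V_q$ of the tail family $\{K_t b_i\}_{t>0}$, which is where the higher-dimensional geometry becomes delicate — one must control how many scales $t$ can "see" the cube $Q_i$ and sum the contributions, and the weight $w$ must be distributed over the exceptional set $\bigcup Q_i^*$ using the $A_1$ property.

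Part (iii), the $L^\infty$–to–weighted-$BMO$ estimate for weights with $w^{-1}\in A_1$, is obtained by essentially the same sharp-function computation as above but run in reverse direction: for $f\in L^\infty_c$ and a cube $Q$, one again splits $f=f\chi_{2Q}+f\chi_{(2Q)^c}$, bounds the mean oscillation of $\V_q\K f$ over $Q$ by $\frac{1}{|Q|}\int_{2Q}|f| + (\text{tail})$, notes $\frac{1}{|Q|}\int_{2Q}|f|\lesssim \|fw\|_\infty \cdot \frac{1}{|Q|}\int_{2Q} w^{-1}$, and invokes $w^{-1}\in A_1$ to compare $\frac{1}{|Q|}\int_{2Q} w^{-1}$ with $\operatorname{ess\,inf}_Q w^{-1} = 1/\|w\|_{L^\infty(Q)}$... more precisely one uses that $A_1$ gives $\frac1{|Q|}\int_Q w^{-1}\lesssim \operatorname{ess\,inf}_Q w^{-1}$, hence $w(x)\cdot\frac1{|Q|}\int_{2Q}|f|\lesssim \|fw\|_\infty$ for a.e.\ $x\in Q$; the tail term is similarly dominated. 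Taking the supremum over $Q$ gives $\|(\V_q\K f)^\sharp w\|_\infty\lesssim\|fw\|_\infty$, and the unweighted $BMO$ statement is the case $w\equiv 1$. The main obstacle, as the authors themselves flag in the introduction, is the weighted weak type $(1,1)$ bound of part (i): unlike in dimension one, the estimate of $\V_q$ applied to the bad part requires a careful geometric decomposition of the range of truncation parameters $t$ relative to the Whitney cubes $Q_i$, and it is here that conditions $({\rm K}_1)$ and $({\rm K}_2)$ must be exploited simultaneously and the argument is genuinely harder than its one-dimensional counterpart.
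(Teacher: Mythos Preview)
Your outline has the right architecture (sharp-function estimate for (ii)--(iii), Calder\'on--Zygmund decomposition for (i)), and matches the paper's overall strategy. However, there are two genuine gaps where your sketch does not actually go through.

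\textbf{The sharp-function estimate.} You claim $(\V_q\K f)^\sharp(x)\lesssim Mf(x)$, but your argument only handles half of the oscillation. Writing $K_{I_j}(f_2)(y)-K_{I_j}(f_2)(c)$ for an interval $I_j=(t_j,t_{j+1}]$, there are two sources of difference: the kernel $K(y,\cdot)-K(c,\cdot)$ (your ``H\"ormander-type bound''), \emph{and} the mismatch of truncation regions $\un_{R_{I_j}}(y-\cdot)-\un_{R_{I_j}}(c-\cdot)$. The first gives your $\alpha_j$-type term and is indeed controlled in $\ell^1$ by $Mf$. But the second does \emph{not} vanish just because $2Q$ is excised; it contributes terms supported on thin annuli near $|y-\cdot|=t_j$ or $t_{j+1}$, and summing their $q$-th powers over all $j$ requires H\"older and a geometric disjointness argument. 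The paper obtains only $(\V_q\K f)^\sharp\lesssim M_r f$ for $r>1$, which still suffices for (ii) and (iii) via the self-improvement of $A_p$ weights, but your pointwise claim with $r=1$ is not established by the reasoning you give.

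\textbf{The weak type $(1,1)$ on the bad part.} This is the more serious gap. You propose to estimate $\V_q\K(b_i)$ off $Q_i^*$ using cancellation $\int b_i=0$ and $(\mathrm{K}_1)$. That works only when the cube $Q_i$ lies \emph{entirely inside} the annulus $x+R_{I_j}$ (the paper's ``interior sum'', handled via $(\mathrm{K}_2)$, not $(\mathrm{K}_1)$). When $Q_i$ straddles the boundary sphere $\{|x-\cdot|=t_j\}$, cancellation is useless: $K_{I_j}(b_i)(x)$ sees only part of $b_i$. This ``boundary sum'' is the heart of the matter, and your proposal contains no mechanism for it. The paper's treatment is substantial: one replaces the $\ell^q$ norm by $\ell^2$, separates into long and short variations according to whether $I_j$ contains dyadic endpoints, counts how many cubes of side $2^j$ can meet a sphere of radius $\sim 2^k$ (giving a crucial factor $2^{(d-1)k+j}$), and invokes an almost-orthogonality lemma to sum in $j$ and $k$. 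None of this is hinted at in your sketch, and the phrase ``control how many scales $t$ can see the cube $Q_i$'' does not capture it---the difficulty is the reverse, controlling how many cubes a given annular boundary can see. Finally, note a bootstrap issue: for the \emph{weighted} weak $(1,1)$, the good part $g$ requires the weighted $L^{p_0}$ bound, which is not assumed but must come from part (ii); the paper first proves unweighted weak $(1,1)$, then (ii), then returns to complete the weighted case.
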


Note that a similar result was proved in \cite{HLP} independently and almost at the same time; however, the result of \cite{HLP}  concerns only smooth truncations of singular integrals. We would emphasize that the above theorem is new even in the unweighted case. With regard to this, compare it with \cite[Theorem~B]{CJRW2}.   The main interest of the weighted $L^\8$-BMO boundedness in part (iii) lies in the fact that it implies, by extrapolation, the type $(p, p)$ estimate in (ii) (see \cite{HMS}). On the other hand, (i) and extrapolation yield (ii) too.

\medskip

The proof of the above theorem can be adapted to the situation of differential operators. For  $t>0$ let $B_t$ denote the open ball in $\real^d$ of center  at the origin and radius $t$.   Given a locally integrable function $f$ on $\real^d$ define
  $$A_t (f)(x)= \frac{1}{|B_t|} \int_{B_t}f(x+y)~dy= \frac{1}{|B_t|} \int_{\real^d}f(y)\un_{B_t}(y-x)~dy, \quad x\in\real^d.$$
These are the central differential operators on $\real^d$. The term ``differential operator" refers here to Lebegue's classical differential theorem.  Let $\A(f)(x) = \{ A_t (f)(x)\}_{t>0}$. We then consider the $q$-variation of the family $\A(f)(x)$: $\V_q\A(f)(x)=\|\A(f)(x)\|_{v_q}$.  Jones {\it et al} proved in \cite{jrw-high} that the operator $\V_q\A$ is bounded on $L^p(\real^d)$ for $1<p\le 2$ and from $L^1(\real^d)$ into $L^{1,\8}(\real^d)$. The following theorem extends their result not only to all $p>2$ but also to the weighted case.

 \begin{theoA}\label{vq D}
 Let $2<q<\8$. Then
 \begin{enumerate}[\rm(i)]
 \item for $w\in A_1$
  $$w\big(\big\{x\in\real^d:\V_q\A (f)(x)>\l\big\}\big)\lesssim\frac1\l \int_{\real^d}|f(x)|w(x)dx,\quad\forall\; f\in L^1(\real^d, w),\;\forall\; \l>0;$$
 \item for $1< p<\8$ and $w\in A_p$
  $$\int_{\real^d}\big(\V_q\A (f)(x)\big)^pw(x)dx\lesssim \int_{\real^d}|f(x)|^pw(x)dx,\quad\forall\; f\in L^p(\real^d, w).$$
   \item for any weight $w$ such that $w^{-1}\in A_1$
  $$\big\|\big(\V_q\A (f)\big)^{\sharp}w\big\|_\8\lesssim \big\|fw\big\|_\8, \quad\forall\; f\in L^\8_c(\real^d).$$
 In particular, $\V_q\A$ is bounded from $L^\8_c(\real^d)$ to $BMO(\real^d)$.
  \end{enumerate}
\end{theoA}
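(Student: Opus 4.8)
The plan is to mimic the proof of Theorem~\ref{vq CZ}, since the averaging operators $A_t$ are, up to harmless modifications, of Calder\'on--Zygmund type. First I would verify that the $q$-variation operator $\V_q\A$ fits the framework of Theorem~\ref{vq CZ}. The family $\{A_t\}_{t>0}$ is generated by the kernels $K_t(x,y)=|B_t|^{-1}\un_{B_t}(y-x)$, and for fixed $x\neq y$ the map $t\mapsto A_t(f)(x)$ is governed by a ``kernel'' that, while not literally one of the truncated kernels $K_t$ from the singular integral setting, satisfies analogous size and regularity estimates: one checks the pointwise bound $\V_q\A(f)(x)\lesssim M(f)(x)$ (since each $A_t(f)(x)$ is dominated by $M(f)(x)$, and here one needs the $q$-variation refinement, which is exactly the content of the unweighted bound of Jones--Rosenblatt--Wierdl \cite{jrw-high} together with a Cotlar-type argument), so in particular $\V_q\A$ is of type $(p_0,p_0)$ for any $1<p_0\le 2$. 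This supplies the hypothesis of Theorem~\ref{vq CZ} — with the caveat that $\V_q\A$ is not itself a singular integral, so rather than quoting Theorem~\ref{vq CZ} as a black box I would re-run its proof, checking at each step that the only properties used are (a) the type $(p_0,p_0)$ bound, (b) a size estimate on the underlying kernel of the form $({\rm K}_0)$, and (c) H\"older-type regularity in both variables of the form $({\rm K}_1)$--$({\rm K}_2)$, all of which hold here with $\d=1$ because $\un_{B_t}$ has bounded variation and the balls move Lipschitz-continuously with $x$ and $t$.

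The core of the argument is then a good-$\l$ / Fefferman--Stein inequality: I would establish a pointwise estimate of the form $\big(\V_q\A(f)\big)^{\sharp}(x)\lesssim M(f)(x)$, or more precisely the local version controlling the oscillation of $\V_q\A(f)$ over a cube $Q$ by the average of $|f|$ over a dilate of $Q$ plus a tail term. To do this, fix a cube $Q$ with center $x_Q$ and split $f=f_1+f_2$ where $f_1=f\un_{2\sqrt d\,Q}$. The part $f_1$ is handled by the type $(p_0,p_0)$ bound via Kolmogorov's inequality, giving a contribution $\lesssim (\,|2\sqrt d\,Q|^{-1}\int_{2\sqrt d\,Q}|f|^{p_0})^{1/p_0}\le C M(f)(x)$ for $x\in Q$. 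For $f_2$, the point is that for $x,z\in Q$ the vector-valued difference $\A(f_2)(x)-\A(f_2)(z)$ has small $v_q$-norm: writing $A_t(f_2)(x)-A_t(f_2)(z)=\int f_2(y)\big(K_t(x,y)-K_t(z,y)\big)dy$ and using that $\V_q$ of a family is bounded by the $\ell^1$-sum (or an $\ell^1$--$v_\8$ interpolation) of the variations of each scale, one reduces to estimating $\sum_t$ or $\int\! |K_t(x,y)-K_t(z,y)|$-type quantities. Because $K_t(x,y)-K_t(z,y)$ is supported where $|x-y|$ is comparable to $t$ (the symmetric difference of two nearby balls) and has the right decay, the standard Calder\'on--Zygmund tail estimate $\int_{|x-y|>2|x-z|}\sup_t|\cdots|\,|f_2(y)|\,dy\lesssim M(f)(x)$ goes through; here one must be a little careful that the variation in $t$ of the family $\{K_t(x,\cdot)-K_t(z,\cdot)\}$ is itself integrable in $y$ with the $A_1$/Dini-type bound, which is where the hypothesis $q>2$ (as opposed to $q\ge1$) is genuinely used, exactly as in the singular integral case.

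With the sharp-function estimate in hand, parts (i), (ii), (iii) follow by the same extrapolation/duality machinery invoked after Theorem~\ref{vq CZ}. Concretely: part (iii), the weighted $L^\8$--$BMO$ bound for $w^{-1}\in A_1$, is an immediate consequence of $\big(\V_q\A(f)\big)^{\sharp}\lesssim M(f)$ together with the pointwise bound $M(f)(x)w(x)\lesssim \|fw\|_\8$ valid when $w^{-1}\in A_1$; then the extrapolation theorem of \cite{HMS} upgrades (iii) to the $A_p$ estimate (ii) for all $1<p<\8$. For part (i), the weighted weak type $(1,1)$, I would run the Calder\'on--Zygmund decomposition of $f\in L^1(w)$ at height $\l$ adapted to the weight: decompose $f=g+b$, $b=\sum_j b_j$ with $b_j$ supported on disjoint cubes $Q_j$ and mean zero; the good part $g$ is controlled by the (unweighted, or $A_1$-weighted) $L^{p_0}$ bound, and for the bad part one estimates $\V_q\A(b_j)$ off the dilated cube $2\sqrt d\,Q_j$ using the regularity $({\rm K}_2)$-type bound on the variation, summing against $w$ via the $A_1$ property of $w$ and the $A_\infty$ reverse-H\"older inequality. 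The main obstacle — as the authors flag in the introduction for the $d\ge2$ case — is the careful geometric bookkeeping in this last step: one must control, uniformly in the increasing sequences defining $v_q$, the total variation in $t$ of the kernel differences $K_t(x,y)-K_t(z,y)$ over the annular regions where the moving balls $B_t(x)$ and $B_t(z)$ differ, and show this is summable in $y$ against $|b_j(y)|$ with the right power of the distance to $Q_j$; in higher dimensions the symmetric difference $B_t(x)\triangle B_t(z)$ is a thin lune whose measure and location depend delicately on $t$, $|x-z|$, and the direction of $x-z$, and it is precisely this analysis — rather than any new idea — that makes the proof longer than in $d=1$. I expect everything else to be routine once that estimate is pinned down.
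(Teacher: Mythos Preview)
Your overall strategy is the same as the paper's, but two of the key intermediate claims are wrong as stated, and fixing them changes the shape of the argument.

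\textbf{The kernels do not satisfy $({\rm K}_1)$--$({\rm K}_2)$.} The kernel $K_t(x,y)=|B_t|^{-1}\un_{B_t}(y-x)$ is an indicator function; for fixed $t$ it has a jump across the sphere $|x-y|=t$, so no pointwise H\"older estimate of the form $|K_t(x,y)-K_t(z,y)|\lesssim |x-z|^\d/|x-y|^{d+\d}$ can hold. Your remark that ``$\un_{B_t}$ has bounded variation'' is true but does not produce $({\rm K}_1)$--$({\rm K}_2)$. Consequently the ``standard Calder\'on--Zygmund tail estimate'' you invoke fails: for $|x-z|\ll|x-y|$ one has $\|t\mapsto K_t(x,y)-K_t(z,y)\|_{v_q}\approx |x-y|^{-d}$ (the function jumps up to $\approx t^{-d}$ and back to $0$ on a short $t$-interval), so the Minkowski bound
\[
\big\|\A(f_2)(x)-\A(f_2)(c)\big\|_{v_q}\le\int|f_2(y)|\,\|K_\cdot(x,y)-K_\cdot(c,y)\|_{v_q}\,dy
\]
is $\lesssim\int_{|y-c|>l}|f(y)|\,|y-c|^{-d}\,dy$, which is \emph{not} controlled by $M(f)(x_0)$ (the dyadic annuli contribute a divergent sum). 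Thus the pointwise bound $(\V_q\A f)^{\sharp}\lesssim M(f)$ is not established by your method. The paper proves instead $(\V_q\A f)^{\sharp}\lesssim M_r(f)$ for $1<r<q$: one bounds the $v_r$-norm (not $v_q$) of $\A(f_2)(x)-\A(f_2)(c)$, splitting $A_{I_j}(f_2)(x)-A_{I_j}(f_2)(c)$ into an ``annulus'' piece and a ``normalization'' piece and using H\"older with exponent $r$ to extract a factor $(t_{j+1}^d-t_j^d)^{r-1}$, which, combined with the thinness of the annuli when $t_{j+1}-t_j\le|x-c|$, supplies the missing decay $|y-c|^{-(d+r-1)}$. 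This $M_r$ (with $r>1$ chosen small using the open property of $A_p$) is exactly what feeds into parts (ii) and (iii).

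\textbf{The bad part in the weak $(1,1)$.} For the averaging operators, if $Q_i\subset x+B_t$ or $Q_i\cap(x+B_t)=\emptyset$ then $A_t(b_i)(x)=0$, so there is \emph{only} a boundary sum --- the ``interior'' mechanism from step~2 of section~\ref{pf of weakA}, which is what your ``regularity $({\rm K}_2)$-type bound'' would imitate, is vacuous here. The boundary terms are genuinely of size $\approx |x-c_i|^{-d}\int|b_i|$ (no extra $l_i^\d/|x-c_i|^\d$ gain), so summing them in $\ell^1$ diverges exactly as above. The paper handles them by passing to the $\ell^2$-norm, decomposing into long and short variations along dyadic scales $(2^k,2^{k+1}]$, and applying the almost-orthogonality Lemma~\ref{AOL} with the geometric input \eqref{control Q} (the cubes $Q_i$ of side $2^j$ meeting a sphere of radius $\approx 2^k$ have total measure $\lesssim 2^{(d-1)k+j}$). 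It is \emph{this} step, not the $f_2$-tail estimate, where the condition $q>2$ is actually used, since one must drop from $v_q$ to $\ell^2$. Your sketch gestures at ``a Cotlar-type argument'' but places it in the wrong part of the proof and does not identify the long/short variation splitting, which is the substantive content of section~\ref{pf of weakB}.
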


\begin{rkA}
 In the above theorem, the family $\{B_t\}_{t>0}$ of balls can be replaced by the family $\{Q_t\}_{t>0}$ of cubes, where $Q_t$ is the cube centered at the origin and having side length equal to $t$.
\end{rkA}

Fundamental examples to which Theorem~\ref{vq CZ} applies are the Riesz transforms. More generally, it also applies to singular integrals with homogeneous kernels.

\begin{coroA}
 Let $\O$ be a function on the unit sphere $S^{d-1}$ such that
  $$\O\in L_1(S^{d-1})\quad\textrm{and}\quad \int_{S^{d-1}}\O(\theta)d\s(\theta)=0,$$
where  $d\s$ denotes surface measure on $S^{d-1}$.  Assume in addition that $\O$ belongs to the H\"older class of order $\a$ for some $\a>0$: 
 $$\sup_{\theta_1, \theta_2\in\O}\frac{|\O(\theta_1)-\O(\theta_2)|}{|\theta_1-\theta_2|^\a}<\8.$$
  Let
  $$K(x, y)=\frac{\O((x-y)/|x-y|)}{|x-y|^d},\quad x, y\in\real^d,\; x\neq y.$$
 Then for $2<q<\8$ the operator $\V_q\K$ is bounded on $L^p(\real^d, w)$ for $1<p<\8$ and $w\in A_p$, and from $L^1(\real^d, w)$ to  $L^{1, \8}(\real^d, w)$ for  $w\in A_1$.
 \end{coroA}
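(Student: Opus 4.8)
The plan is to deduce the corollary directly from Theorem~\ref{vq CZ}: I would check that the kernel $K(x,y)=\O\big((x-y)/|x-y|\big)|x-y|^{-d}$ satisfies the regularity hypotheses $({\rm K}_0)$--$({\rm K}_2)$ for a suitable $\d>0$, verify that $\V_q\K$ is of type $(2,2)$ on $\real^d$ (with $2<q<\8$), and then invoke parts (i) and (ii) of Theorem~\ref{vq CZ}, which give exactly the asserted weighted weak type $(1,1)$ estimate for $w\in A_1$ and the $L^p(\real^d,w)$ estimate for $1<p<\8$, $w\in A_p$.

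For the kernel conditions, $({\rm K}_0)$ is immediate: a H\"older continuous function on the compact set $S^{d-1}$ is bounded, so $|K(x,y)|\le\|\O\|_\8|x-y|^{-d}$. For $({\rm K}_1)$ I would fix $x,y,z$ with $|x-y|>2|x-z|$ --- so that $|z-y|$ is comparable to $|x-y|$ --- put $u=(x-y)/|x-y|$, $v=(z-y)/|z-y|$, and split
 $$K(x,y)-K(z,y)=\O(u)\Big(\frac1{|x-y|^d}-\frac1{|z-y|^d}\Big)+\frac{\O(u)-\O(v)}{|z-y|^d}.$$
The mean value theorem applied to $r\mapsto r^{-d}$ bounds the first term by $C|x-z|\,|x-y|^{-d-1}$, while the elementary inequality $|u-v|\lesssim|x-z|/|x-y|$ together with the H\"older regularity of $\O$ bounds $|\O(u)-\O(v)|$ by $C(|x-z|/|x-y|)^\a$; dividing by $|z-y|^d\approx|x-y|^d$ and using $|x-z|\le|x-y|$, both terms are $\lesssim|x-z|^\d|x-y|^{-d-\d}$ with $\d=\min(\a,1)$. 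This gives $({\rm K}_1)$, and $({\rm K}_2)$ follows by applying the same computation to $\wt\O(\theta):=\O(-\theta)$, since $K(y,x)=\wt\O\big((x-y)/|x-y|\big)|x-y|^{-d}$, $K(y,z)=\wt\O\big((z-y)/|z-y|\big)|z-y|^{-d}$, and $\wt\O$ is H\"older of order $\a$ as well.

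For the unweighted $L^2$ bound I would quote the higher-dimensional variational estimates for homogeneous singular integrals from \cite{CJRW2}. Alternatively, one can give a self-contained proof by the method of rotations: writing $K_t(f)(x)$ in polar coordinates centred at $x$ and using $\int_{S^{d-1}}\O\,d\s=0$ to renormalize the radial integrals, one realizes $K_t(f)(x)$ as a spherical average, weighted by $\O$, of two-sided truncations $H^t_\theta f(x)$ of the Hilbert transform in the direction $\theta$; since $v_q$ is a normed space, $\V_q\K(f)(x)\le\int_{S^{d-1}}|\O(\theta)|\,\big\|\{H^t_\theta f(x)\}_t\big\|_{v_q}\,d\s(\theta)$, and Minkowski's inequality in $L^2(\real^d)$ together with the rotation invariance of Lebesgue measure and the one-dimensional variational inequality for the truncated Hilbert transform (from \cite{CJRW2}, lifted to $\real^d$ by Fubini) yields $\|\V_q\K f\|_2\lesssim\|\O\|_{L^1(S^{d-1})}\|f\|_2$.

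I do not expect a genuine obstacle here: granted $({\rm K}_0)$--$({\rm K}_2)$ and the type $(2,2)$ bound, the corollary is an immediate consequence of Theorem~\ref{vq CZ}. The one point that needs some care is the renormalization of the even part of $\O$ in the method of rotations --- the one-sided radial integral $\int_t^\8 dr/r$ diverges and must be recombined, via the cancellation condition, into a bona fide two-sided truncated Hilbert transform before the one-dimensional theory applies --- but this is entirely classical, and in any case the unweighted estimate may be taken verbatim from \cite{CJRW2}.
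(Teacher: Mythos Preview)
Your proposal is correct and follows essentially the same route as the paper: the paper simply observes that the kernel satisfies $({\rm K}_0)$--$({\rm K}_2)$ (implicitly) and quotes \cite{CJRW2} for the unweighted type $(p,p)$ of $\V_q\K$, then invokes Theorem~\ref{vq CZ}. Your explicit verification of $({\rm K}_0)$--$({\rm K}_2)$ and your optional method-of-rotations argument for the $L^2$ bound are correct elaborations, but they add detail rather than change the strategy.
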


For the kernel $K$ in this corollary, Campbell {\it et al}  proved in  \cite{CJRW2} that $\V_q\K$ is of type $(p, p)$ for $1<p<\8$ and weak type $(1, 1)$. So the corollary follows immediately from Theorem~\ref{vq CZ}. Note that the Riesz transforms $R_j$ are included in the family of singular integrals considered in the corollary. Thus we get weighted variation inequalities for Riesz transforms too. Such inequalities for Riesz transforms were already obtained in \cite{GT} but only for some special weights. More precisely, if $1<p<\8$ and $w(x)=|x|^\a$ with $-1<\a<p-1$, then $\V_q\R_j$ is bounded on $L^p(\real^d, w)$. However,   the result of \cite{GT} has the additional important feature that the relevant constant is dimension free.

\medskip

Let us give an application of Theorem~\ref{vq D} to approximate identities.

\begin{coroA}
 Let $\f: \real^d\to [0, +\infty)$ be a radial and radially decreasing integrable function. Let
$\f_t(x)=\frac{1}{t^d}\f(\frac{x}{t})$ and $\Phi(f)(x)=\{\f_t*f(x)\}_{t>0}$. Then for $2<q<\8$ the operator $\V_q\Phi$ is bounded on $L^p(\real^d, w)$ for $1<p<\8$ and $w\in A_p$, and from $L^1(\real^d, w)$ to  $L^{1, \8}(\real^d, w)$ for  $w\in A_1$.
\end{coroA}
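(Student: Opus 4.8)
The plan is to derive the corollary from Theorem~\ref{vq D} by writing $\f$ as a superposition of normalized indicators of balls centered at the origin, and then exploiting the fact that the $v_q$-seminorm is sublinear and invariant under increasing reparametrizations of the index $t$.

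First I would record the classical decomposition of a radial, radially decreasing integrable function. Write $\f(x)=\psi(|x|)$ with $\psi:[0,\8)\to[0,\8)$ non-increasing; integrability of $\f$ forces $\psi(r)\to0$ as $r\to\8$, so if $\nu$ is the positive Lebesgue--Stieltjes measure on $(0,\8)$ attached to $-\psi$ one has $\psi(r)=\nu((r,\8))$ for a.e.\ $r$ (adjusting $\psi$ on a countable set changes $\f$ only on a null set). Hence $\f(x)=\int_0^\8\un_{B_s}(x)\,d\nu(s)$ a.e., and Fubini gives $\|\f\|_1=|B_1|\int_0^\8 s^d\,d\nu(s)<\8$. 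Scaling, $\f_t(x)=\int_0^\8 t^{-d}\un_{B_{st}}(x)\,d\nu(s)$, and since $t^{-d}|B_{st}|=|B_1|s^d$ we obtain, for a.e.\ $x$ and every $t>0$,
 $$\f_t*f(x)=\int_0^\8 A_{st}(f)(x)\,d\mu(s),\qquad d\mu(s):=|B_1|\,s^d\,d\nu(s),$$
where $\mu$ is a finite positive measure of total mass $\|\f\|_1$. The integral converges absolutely a.e.\ and the identity is legitimate because $|A_r(f)(x)|\le M(f)(x)$, which is finite a.e.\ for every $f$ in the spaces considered below.

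Next, because $v_q$ is a Banach space modulo constants, Minkowski's integral inequality applied to the $v_q$-valued integral in $s$ gives, for a.e.\ $x$,
 $$\V_q\Phi(f)(x)=\big\|\{\f_t*f(x)\}_{t>0}\big\|_{v_q}\le\int_0^\8\big\|\{A_{st}(f)(x)\}_{t>0}\big\|_{v_q}\,d\mu(s).$$
For each fixed $s>0$ the substitution $t\mapsto st$ is an increasing bijection of $(0,\8)$, hence maps increasing sequences to increasing sequences, so the inner norm equals $\big\|\{A_r(f)(x)\}_{r>0}\big\|_{v_q}=\V_q\A(f)(x)$, independently of $s$. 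Thus $\V_q\Phi(f)(x)\le\|\f\|_1\,\V_q\A(f)(x)$ pointwise a.e., and the weighted $L^p$ bound for $1<p<\8$, $w\in A_p$, as well as the weighted weak type $(1,1)$ bound for $w\in A_1$, follow immediately from parts (ii) and (i) of Theorem~\ref{vq D}.

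There is no serious obstacle here; the only points needing care are making the superposition formula of the second paragraph fully rigorous (the a.e.\ modification of $\psi$, finiteness of $\mu$, absolute convergence, and the interchange of integrals) and the measurability in $s$ required to apply Minkowski's inequality to the $v_q$-valued integrand. The conceptual content is just the reparametrization invariance of $\V_q$, which is precisely what lets the variation of the averaging family $\{A_r\}_{r>0}$ dominate that of the approximate identity $\{\f_t\}_{t>0}$.
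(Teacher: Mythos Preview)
Your proof is correct and follows the same strategy as the paper: both decompose $\f$ as a superposition of normalized ball indicators, use the reparametrization invariance of $\V_q$ to get the pointwise bound $\V_q\Phi(f)\le\|\f\|_1\,\V_q\A(f)$, and then invoke Theorem~\ref{vq D}. The only difference is cosmetic: the paper reduces to the case where $\f$ is a finite sum $\sum_k\a_k\un_{B_{r_k}}$ by approximation, whereas you carry out the full Lebesgue--Stieltjes integral decomposition directly via Minkowski's inequality.
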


\begin{proof}
 By approximation we can assume that $\f$ is of the form: $\f=\sum_k\a_k\un_{B_{r_k}}$ with $\a_k>0$ (the sum being finite). Then
  $$\f_t*f(x)=\sum_k\a_k|B_{r_k}| A_{r_kt}(f)(x);$$
 whence
  $$\V_q\Phi(f)(x)\le \|\f\|_1\V_q\A(f)(x).$$
 Then Theorem~\ref{vq D} immediately implies the corollary.
 \end{proof}

In particular, for $\f(x)=e^{-|x|^2}$ (resp. $\f(x)=(1+|x|^2)^{-d/2}$), the convolutions $\{\f_t*f\}_t$ give rise to the heat (resp. Poisson) semigroup relative to the Laplacian of $\real^d$, up to a multiple constant. For this two examples, the above corollary goes back to \cite{CMMTV}

\medskip

Both Theorems~\ref{vq CZ} and \ref{vq D} can be extended to the vector-valued case. The following result for the differential operators  improves Fefferman-Stein's celebrated vector-valued. Hong and Ma \cite{HM} extend it to the case where the space $\el_\rho$ is replaced by any UMD lattice.

\begin{theoA}\label{vq vector}
Let $q>2$ and $1<\rho<\8$.
\begin{enumerate}[\rm (i)]
  \item Let $K$ be a  kernel on $\real^d$ satisfying $({\rm K}_0)$-$({\rm K}_2)$ and such that the operator $\V_q\K$ is of type $(p_0,p_0)$ for some $1<p_0<\8$. Let  $1\le p<\infty$ and  $w \in A_p$. Then 
  $$\int_{\real^d}\Big(\sum_n\big(\V_q \K(f_n)(x)\big)^\rho\Big)^{p/\rho}w(x)dx
  \lesssim \int_{\real^d}\Big(\sum_n|f_n(x)|^\rho\Big)^{p/\rho}w(x)dx$$
  for all  finite sequences $\{f_n\}_{n\ge1}\subset L^p(\real^d, w)$ with $1<p<\8$, and
  $$w\big(\big\{x\in\real^d: \sum_n\big(\V_q \K(f_n)(x)\big)^\rho>\l^\rho\big\}\big)
  \lesssim \frac1\l\int_{\real^d}\Big(\sum_n|f_n(x)|^\rho\Big)^{1/\rho}w(x)dx$$
for all  finite sequences $\{f_n\}_{n\ge1}\subset L^1(\real^d, w)$ and any $\l>0$.
 \item A similar statement holds for the operator $\V_q\A$ in place of $\V_q\K$.
\end{enumerate}
 \end{theoA}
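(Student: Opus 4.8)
The plan is to linearize and then transfer the scalar estimates of Theorems~\ref{vq CZ} and~\ref{vq D} to the vector-valued setting: the strong-type inequalities will come from Rubio de Francia's extrapolation applied to the scalar bounds, and the weak-type endpoint will require revisiting the proof of the scalar weighted weak $(1,1)$ inequality. As in those theorems I view $\K\colon f\mapsto\{K_tf\}_{t>0}$ as a \emph{linear} operator from scalar functions to $v_q$-valued functions, so that $\V_q\K(f)=\|\K(f)\|_{v_q}$, and likewise the $v_q$-valued operator $\A$ built from the central averages $A_t$; it is enough to treat $\K$, the case of $\A$ being identical with Theorem~\ref{vq D} in place of Theorem~\ref{vq CZ}. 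Only finite sequences $\{f_n\}$ occur, so there are no convergence issues.

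First I would dispose of the strong-type inequality in (i). For the single exponent $p=\rho$ and any $w\in A_\rho$ it is immediate from Theorem~\ref{vq CZ}(ii), applied to each $f_n$ separately, that
\[
\int_{\real^d}\sum_n\big(\V_q\K(f_n)(x)\big)^\rho w(x)\,dx=\sum_n\int_{\real^d}\big(\V_q\K(f_n)(x)\big)^\rho w(x)\,dx\lesssim\sum_n\int_{\real^d}|f_n(x)|^\rho w(x)\,dx,
\]
with a constant depending only on the $A_\rho$-characteristic of $w$. In other words the pair $\big(\big(\sum_n(\V_q\K(f_n))^\rho\big)^{1/\rho},\,\big(\sum_n|f_n|^\rho\big)^{1/\rho}\big)$ satisfies the $L^\rho(w)$-inequality for every $w\in A_\rho$; Rubio de Francia's extrapolation theorem (see \cite{gar-rubio}) then promotes this to the $L^p(w)$-inequality for every $1<p<\8$ and every $w\in A_p$, which is exactly the first inequality of (i). The same argument with Theorem~\ref{vq D} gives the corresponding part of (ii).

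The hard part will be the weak-type endpoint of (i): for $w\in A_1$,
\[
w\big(\big\{x\in\real^d:\textstyle\sum_n\big(\V_q\K(f_n)(x)\big)^\rho>\l^\rho\big\}\big)\lesssim\frac1\l\int_{\real^d}\Big(\sum_n|f_n(x)|^\rho\Big)^{1/\rho}w(x)\,dx.
\]
Extrapolation does not help, since the $\el_\rho$-valued statement is needed at the \emph{fixed} exponent $p=1$; and one cannot simply invoke the vector-valued Calderón--Zygmund theorem for operator-valued kernels, because the $v_q$-valued kernel $\{K(x,y)\un_{|x-y|>t}\}_{t>0}$ of $\K$ does not satisfy the pointwise Hörmander condition in the $v_q$-norm: for $|x-y|>2|x-x'|$ the difference of the two $v_q$-valued kernels still has norm comparable to $|x-y|^{-d}$ (because of the jump of the truncations at $t\approx|x-y|$), rather than to $|x-x'|^{\d}|x-y|^{-d-\d}$. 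My proposal is to revisit the proof of the scalar weighted weak $(1,1)$ inequality, Theorem~\ref{vq CZ}(i), which does not rely on a $v_q$-Hörmander condition but on a direct geometric analysis of the truncated kernels whose decisive input is the cancellation $\int b=0$ of the ``bad'' pieces $b$ of a Calderón--Zygmund decomposition; such an analysis should persist in the presence of an outer $\el_\rho$-norm. Concretely, given $w\in A_1$ and $\l>0$ I would perform a Calderón--Zygmund decomposition of $\big(\sum_n|f_n|^\rho\big)^{1/\rho}$ at height $\l$ relative to the measure $w\,dx$, split $f_n=g_n+\sum_Q b_{n,Q}$ accordingly with each $b_{n,Q}$ supported on a cube $Q$ of the decomposition and of mean zero, bound the good part $\{g_n\}$ by the $\el_\rho$-valued $L^2(w)$-estimate obtained in the previous step, and handle the bad part by rerunning the geometric estimates of Theorem~\ref{vq CZ}(i) uniformly in $n$ and then summing inside $\el_\rho$. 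An alternative is to pass through a short/long-variation splitting of the $v_q$-norm: the short-variation square function is a bona fide $\el_2$-valued Calderón--Zygmund operator, and the long-variation part reduces, via a dyadic-averaging argument, to variation inequalities for martingale averages composed with averaging operators, each constituent then carrying its own vector-valued and weighted theory. Either way, executing this weak-type step is where I expect the real difficulty to lie; the concluding remark on cubes in place of balls and the corollaries are untouched, since they use only the structure of the operators $A_t$.
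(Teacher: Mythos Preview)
Your treatment of the strong-type inequality is exactly the paper's: apply Theorem~\ref{vq CZ}(ii) componentwise at $p=\rho$ for every $w\in A_\rho$, then extrapolate. Your overall plan for the weak type $(1,1)$ is also the paper's---Calder\'on--Zygmund decomposition of $\varphi(x)=\big(\sum_n|f_n(x)|^\rho\big)^{1/\rho}$, good part by the vector-valued $L^\rho(w)$ bound, interior sum and long variation by Minkowski plus the scalar geometric estimates of section~\ref{pf of weakA}.

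There is, however, a genuine gap at the short-variation part of the boundary sum, precisely the spot you flag as ``where I expect the real difficulty to lie.'' The scalar step~5 argument does \emph{not} vectorize by ``rerunning uniformly in $n$ and summing inside $\el_\rho$'': in the scalar proof one controls $g_{k,j}(x)^2$ by exploiting the disjointness of the annuli $\{x+R_I\}_{I\in\S_k}$ to collapse $\sum_{I\in\S_k}\int_{Q_i}\un_{x+R_I}|b_i|$ to $\int_{Q_i}|b_i|$. In the vector-valued setting the intervals $I\in\S_{n,k}$ depend on $n$ (the linearizing sequence $\{t_{n,j}\}$ varies with $n$), so after applying Minkowski to pass the $\el_\rho$-norm inside, the resulting sum over $I$ is no longer over a single disjoint family and the collapse fails. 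The paper records that this obstruction blocked completion of the manuscript for several years.

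The resolution the paper uses, communicated by Hong and inspired by \cite{kzk}, is different in kind: rather than push an $L^2$ estimate through Lemma~\ref{AOL}, one works with an exponent $1<r\le\min(q,\rho)$, chooses $\a$ with $(d-1)/r'<\a<d/r'$, and uses H\"older with weights $2^{(k-j)\a}$ to absorb the count $\lesssim 2^{(d-1)(k-j)}$ of dyadic cubes of sidelength $2^j$ meeting the boundary of $x+R_I$. This yields directly $\int_{\wt\O^c}B_2(x)^r w(x)\,dx\lesssim\sum_i w(Q_i)$, bypassing Lemma~\ref{AOL} entirely for the short variation. Your ``alternative'' sketch (short-variation square function as a vector-valued Calder\'on--Zygmund operator) does not match this and would run into the same kernel-regularity issue you correctly identified earlier.
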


\begin{rkA}
 The first version of this paper was written almost at the same time as \cite{MTX} in the fall of 2012. All previous results were proved in that version except the weak type $(1, 1)$ inequality of  Theorem~\ref{vq vector}, which has prevented us from finalizing the paper (more precisely, the obstruction concerned the proof of \eqref{boundary2} below). It is only recently that Guixiang Hong pointed to us that an argument of \cite{kzk} could help lift this obstruction. Note that the main result of \cite{kzk} is precisely the part of Theorem~\ref{vq vector} for the differential operators. Although adapted from the pattern set up in \cite{jrw-high}, its proof differs from ours. So the overlap between the two papers is not significant. 
 \end{rkA}


\medskip

The paper is organized as follows. In  sections~\ref{pf of weakA} and \ref{pf of pA} we prove  Theorem~\ref{vq CZ}. The proofs of the two parts (i) and (ii) depend one on another. More precisely, the proof of (ii) depends on the unweighted version of (i), and that of (i) on (ii). The proof of the weak type $(1, 1)$ inequality is quite technical and requires a careful geometrical analysis of the kernel. This proof is much more complicated than the corresponding one in the one dimensional case in \cite{MTX}. However, the proof of the type $(p, p)$ inequality does not differ too much from the one dimensional case. In  sections~\ref{pf of pB} and \ref{pf of weakB}, we present the proof of Theorem~\ref{vq D}. This proof is similar to that  of Theorem~\ref{vq CZ}. The last section is devoted to the proof of Theorem~\ref{vq vector}.


\section{Proof of Theorem~\ref{vq CZ}: weak type $(1,1)$}\label{pf of weakA}


In this section we prove the weak type $(1, 1)$ inequality of Theorem~\ref{vq CZ}. In fact, only the unweighted version of part (i), i.e., for $w\equiv1$ will be completely proved in this section. The full generality will be completed in the end of section~\ref{pf of pA}. This proof is long and technical. It is based on a careful geometrical analysis of the truncated kernels of the singular integral. Although we follow the general pattern set up in \cite{CJRW2}, our argument is subtler than that of \cite{CJRW2}. For instance, our treatment of the long variation is quite complicated, while the one of \cite{CJRW2} is rather straightforward.

\medskip

As usual, the  classical Calder\'on-Zygmund decomposition will play a crucial role in our proof. Let us state it below for later reference (cf. e.g., \cite[Theorem~II.1.12]{gar-rubio}).  Given  a cube $Q\subset\real^d$ let $\wt Q=5 \sqrt d\,Q$,  the cube with the same center as $Q$ but $5 \sqrt d$ times the side length.

\begin{lem}\label{CZD}
 Let  $f$ be a compactly supported integrable function on $\real^d$ and  $\lambda >0$.  Then there exists a finite  disjoint family $\{Q_i\}$ of dyadic cubes satisfying the following properties
 \begin{itemize}
 \item[\rm(i)] $|f| \le \lambda$ on $\O^c$, where $\displaystyle\O = \bigcup_iQ_i$;
 \item[\rm(ii)] $\displaystyle \lambda < \frac1{|Q_i|} \int_{Q_i} |f| \le 2^d\lambda$;
 \item[\rm(iiii)] $\displaystyle\O\subset\{x\in\real^d: M(f)(x) > \lambda\}$ and $\displaystyle\{x\in\real^d: M(f)(x) > 4^d\lambda\}\subset \wt\O$, where $\displaystyle\wt\O = \bigcup_i\wt Q_i$;
 \end{itemize}
Define
 \begin{align*}
 &g=f \textrm{ on } \O^c\quad\textrm{and}\quad  g= \frac1{|Q_i|} \int_{Q_i} f \, \textrm{ on } Q_i  \textrm{ for each }i,\\
 &b= \sum_i b_i, \textrm{ where } b_i = \big(f -\frac1{|Q_i|} \int_{Q_i} f  \big) \un_{Q_i} .
  \end{align*}
Then
 \begin{itemize}
 \item[\rm(iv)] $f=g+b$;
 \item[\rm(v)] $\|g\|_\8 \le 2^d\lambda$;
\item[\rm(vi)] for each $i$, $\displaystyle  \int_{\real^d} b_i= 0 $ and $\displaystyle \frac1{|Q_i|}\int_{\real^d}|b_i|\le 2^{d+1} \lambda$.
 \end{itemize}
 \end{lem}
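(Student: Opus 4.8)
This is the classical Calder\'on--Zygmund decomposition, so the plan is to prove it by the standard dyadic stopping-time argument. Work with the usual dyadic grid on $\real^d$. Since $f$ is integrable with compact support, every dyadic cube $Q$ of side length $2^N$ with $2^{Nd}\ge\lambda^{-1}\|f\|_1$ has mean $\frac1{|Q|}\int_Q|f|\le\lambda$. Starting from such ``coarse'' cubes and subdividing dyadically, I would call a dyadic cube $Q$ \emph{selected} if $\frac1{|Q|}\int_Q|f|>\lambda$ while its dyadic parent $\widehat Q$ satisfies $\frac1{|\widehat Q|}\int_{\widehat Q}|f|\le\lambda$; equivalently, the selected cubes are the maximal dyadic cubes on which the mean of $|f|$ exceeds $\lambda$. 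They form a pairwise disjoint family $\{Q_i\}$ with $\sum_i|Q_i|\le\lambda^{-1}\sum_i\int_{Q_i}|f|\le\lambda^{-1}\|f\|_1<\8$. Put $\O=\bigcup_iQ_i$.

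Properties (i)--(ii) are then immediate. For (ii) the lower bound is the selection rule, and $\int_{Q_i}|f|\le\int_{\widehat Q_i}|f|\le\lambda|\widehat Q_i|=2^d\lambda|Q_i|$ gives the upper bound. For (i): if $x\notin\O$ then every dyadic cube containing $x$ has mean $\le\lambda$, hence $|f(x)|\le\lambda$ at every Lebesgue point of $f$ in $\O^c$. The inclusion $\O\subset\{M f>\lambda\}$ in (iii) holds since $M f\ge\frac1{|Q_i|}\int_{Q_i}|f|>\lambda$ on each $Q_i$. The reverse inclusion I would prove by contraposition: let $x\notin\widetilde\O$ and let $Q$ be any cube containing $x$; pick the dyadic generation with side length $2^{-k}\in[\mathrm{side}(Q),2\,\mathrm{side}(Q))$, so that $Q$ meets at most $2^d$ dyadic cubes $R_1,\dots,R_m$ of that generation, $Q\subset\bigcup_jR_j$ and $|R_j|<2^d|Q|$. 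If $\frac1{|Q|}\int_Q|f|>4^d\lambda$, then $\int_{R_j}|f|>2^d\lambda|Q|>\lambda|R_j|$ for some $j$, so $R_j$ lies in some selected cube $Q_i$; but then $x\in Q\subset3R_j\subset3Q_i\subset\widetilde Q_i$ (using $3\le5\sqrt d$), contradicting $x\notin\widetilde\O$. Hence $M f(x)\le4^d\lambda$ off $\widetilde\O$.

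Finally, with $g$ and $b=\sum_ib_i$ defined as in the statement, (iv)--(vi) follow by inspection: $f=g+b$ because $g=f$ and $b=0$ on $\O^c$, while on each (disjoint) $Q_i$ one has $g+b_i=f$ and the remaining $b_j$ vanish; $\|g\|_\8\le2^d\lambda$ since $|g|=|f|\le\lambda$ off $\O$ and $|g|\le\frac1{|Q_i|}\int_{Q_i}|f|\le2^d\lambda$ on $Q_i$ by (ii); $\int b_i=0$ is the definition of $b_i$; and $\frac1{|Q_i|}\int|b_i|\le\frac2{|Q_i|}\int_{Q_i}|f|\le2^{d+1}\lambda$.

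I do not expect a real obstacle here: this is textbook material. The one step needing a little care is the reverse inclusion in (iii) --- comparing an arbitrary cube from the definition of $M$ with dyadic cubes --- which is precisely what forces dilating the $Q_i$ by a fixed factor; any factor larger than $3$ works for this argument, and the choice $5\sqrt d$ simply leaves comfortable room (e.g.\ for comparisons with balls).
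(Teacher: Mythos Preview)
The paper does not prove this lemma at all; it simply states it ``for later reference'' and cites \cite[Theorem~II.1.12]{gar-rubio}. Your argument is exactly the standard dyadic stopping-time proof that one finds in that reference, so there is nothing to compare.

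One small slip in your write-up of (iii): the chain $3R_j\subset 3Q_i$ is not literally true when $R_j$ is a proper dyadic subcube of $Q_i$ (the dilates are not concentric). What is true, and what you actually need, is $3R_j\subset 5Q_i\subset \wt Q_i$: any point of $3R_j$ lies within $2\,l(Q_i)$ of the center of $Q_i$ in each coordinate, hence in $4Q_i\subset 5\sqrt d\,Q_i$. With that correction the argument is fine. (Also note that ``finite'' in the statement should really read ``at most countable''; your bound $\sum_i|Q_i|\le\lambda^{-1}\|f\|_1$ is the correct conclusion and is all that is used downstream.)
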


We also require the following elementary fact which is to be compared with Cotlar's almost orthogonality lemma (see  \cite{CJRW1} for the case $r=2$).

\begin{lem}\label{AOL}
 Let $\{h_{k, j}\}_{k, j\in\ent}$ be a family of vectors in a Banach space $B$ and $\{\Delta_{j}\}_{j\in\ent}$  a family of nonnegative numbers with $\sum_{j\in\ent}\Delta_{j}<\8$.  Assume that $\|h_{k, j}\|\le \o(k-j)\Delta_j^{1/r}$ with $1<r<\8$ and $\o=\sum_{j\in\ent}\o(j)<\8$. Then
 $$\sum_{k\in\ent}\big\|\sum_{j\in\ent}h_{k,j}\big\|^r\le \o^r\sum_{j\in\ent}\Delta_j.$$
 \end{lem}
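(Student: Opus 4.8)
The plan is to reduce the whole statement to a scalar estimate of convolution type. First I would apply the triangle inequality in $B$: for each fixed $k$ the series $\sum_{j}\|h_{k,j}\|$ converges (it is dominated by $\o^{1/r'}\big(\o\,\sup_j\Delta_j\big)^{1/r}<\8$ by the hypotheses $\o<\8$ and $\sum_j\Delta_j<\8$), so $\sum_j h_{k,j}$ converges in $B$ and
$$\Big\|\sum_{j\in\ent}h_{k,j}\Big\|\le\sum_{j\in\ent}\|h_{k,j}\|\le\sum_{j\in\ent}\o(k-j)\,\Delta_j^{1/r}.$$
Hence it suffices to prove the purely numerical inequality
$$\sum_{k\in\ent}\Big(\sum_{j\in\ent}\o(k-j)\,\Delta_j^{1/r}\Big)^r\le\o^r\sum_{j\in\ent}\Delta_j.$$

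To get this, I would fix $k$ and apply H\"older's inequality with exponents $r$ and $r'=r/(r-1)$, after splitting the weight as $\o(k-j)=\o(k-j)^{1/r'}\cdot\o(k-j)^{1/r}$:
$$\sum_{j\in\ent}\o(k-j)\,\Delta_j^{1/r}=\sum_{j\in\ent}\o(k-j)^{1/r'}\big(\o(k-j)\,\Delta_j\big)^{1/r}\le\Big(\sum_{j\in\ent}\o(k-j)\Big)^{1/r'}\Big(\sum_{j\in\ent}\o(k-j)\,\Delta_j\Big)^{1/r},$$
and the first factor on the right equals $\o^{1/r'}$. Raising to the power $r$, summing over $k$, and interchanging the two sums (all terms being nonnegative, so Tonelli applies),
$$\sum_{k\in\ent}\Big(\sum_{j\in\ent}\o(k-j)\,\Delta_j^{1/r}\Big)^r\le\o^{r/r'}\sum_{k\in\ent}\sum_{j\in\ent}\o(k-j)\,\Delta_j=\o^{r/r'}\sum_{j\in\ent}\Delta_j\sum_{k\in\ent}\o(k-j)=\o^{r/r'+1}\sum_{j\in\ent}\Delta_j.$$
Since $r/r'+1=(r-1)+1=r$, the right-hand side is exactly $\o^r\sum_j\Delta_j$, which finishes the proof.

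I do not expect a genuine obstacle here: the content is simply the discrete Young inequality $\|a*b\|_{\ell^r}\le\|a\|_{\ell^1}\|b\|_{\ell^r}$ applied to $a=\{\o(j)\}_{j\in\ent}$ and $b=\{\Delta_j^{1/r}\}_{j\in\ent}$, and the only things to be careful about are the absolute convergence of the double series (which legitimises both the interchange of sums and the extension of the triangle inequality to the infinite sum in $B$) and the elementary exponent bookkeeping $r/r'+1=r$. I would carry out the H\"older computation explicitly rather than merely cite Young's inequality, since doing so exhibits the sharp constant $\o^r$ and keeps the lemma self-contained for its later use.
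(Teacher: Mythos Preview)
Your proof is correct and is essentially the same argument as the paper's: triangle inequality in $B$, then H\"older with the splitting $\o(k-j)=\o(k-j)^{1/r'}\o(k-j)^{1/r}$, then interchange of sums. The only cosmetic difference is that you apply the hypothesis $\|h_{k,j}\|\le\o(k-j)\Delta_j^{1/r}$ before H\"older while the paper applies it after, and you add an explicit word on absolute convergence that the paper leaves implicit.
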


\begin{proof}
The proof is straightforward by the H\"older inequality. Indeed, letting $r'$ be the conjugate index of $r$, we have
  \begin{align*}
  \sum_{k\in\ent}\big\|\sum_{j\in\ent}h_{k,j}\big\|^r
  &\le \sum_{k\in\ent}\big(\sum_{j\in\ent}\o(k-j)\big)^{r/r'}\big(\sum_{j\in\ent}\o(k-j)^{-r/r'}\|h_{k,j}\big\|^r\big)\\
  &\le \o^{r/r'} \sum_{k\in\ent}\sum_{j\in\ent}\o(k-j)\Delta_j
  =\o^r\sum_{j\in\ent}\Delta_j.
   \end{align*}
 Thus we are done.
 \end{proof}

The following standard lemma will be used several times later on.

\begin{lem}\label{standard}
 Let $w$ be a locally integrable nonnegative function on $\real^d$, $x_0\in\real^d$ and $r>0, \a>0$. Then
 $$\int_{|x-x_0|>r}\frac{w(x)dx}{|x-x_0|^{d+\a}}\lesssim \frac{1}{r^\a}\,M(w)(y)\;\textrm{ for }\;  y\in x_0+B_r.$$
 Consequently, if $w\in A_1$, then
  $$\int_{|x-x_0|>r}\frac{w(x)dx}{|x-x_0|^{d+\a}}\lesssim \frac{1}{r^\a}\,w(y)\;\textrm{ for }\; a.e. \; y\in x_0+B_r.$$
 \end{lem}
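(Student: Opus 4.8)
The plan is to prove this by a straightforward dyadic decomposition of the region of integration, combined with the definition of the Hardy–Littlewood maximal function. First I would slice the domain $\{x : |x-x_0| > r\}$ into annuli $A_k = \{x : 2^k r < |x-x_0| \le 2^{k+1} r\}$ for $k \ge 0$, so that on $A_k$ we have $|x-x_0|^{d+\a} \gtrsim (2^k r)^{d+\a}$, hence
\begin{align*}
 \int_{|x-x_0|>r}\frac{w(x)\,dx}{|x-x_0|^{d+\a}}
 &\le \sum_{k\ge0}\frac{1}{(2^k r)^{d+\a}}\int_{A_k}w(x)\,dx
 \le \sum_{k\ge0}\frac{1}{(2^k r)^{d+\a}}\int_{|x-x_0|\le 2^{k+1}r}w(x)\,dx.
\end{align*}
The ball $\{x : |x-x_0| \le 2^{k+1} r\}$ has measure comparable to $(2^{k+1} r)^d$, so the $k$-th term is bounded by a constant times $(2^k r)^{-\a}$ times the average of $w$ over that ball. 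Since any point $y \in x_0 + B_r$ lies in the ball $\{x : |x-x_0| \le 2^{k+1} r\}$ (indeed this ball is a ball centered at a point within distance $r$ of $y$ of radius at least $2r$, hence contains $y$), that average is $\lesssim M(w)(y)$ — more carefully, one enlarges to a cube or uses the ball-based maximal function, which is equivalent. Summing the geometric series $\sum_{k\ge0} 2^{-k\a} r^{-\a}$ gives the bound $\lesssim r^{-\a} M(w)(y)$, which is the first assertion.

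For the second assertion, I would simply invoke the definition of $A_1$: if $w \in A_1$ then $M(w) \le C w$ almost everywhere, so substituting this into the first inequality yields $\int_{|x-x_0|>r} w(x)|x-x_0|^{-d-\a}\,dx \lesssim r^{-\a} w(y)$ for a.e.\ $y \in x_0 + B_r$.

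There is essentially no hard part here; the only minor point requiring a little care is the passage from averages over balls centered at $x_0$ to the maximal function evaluated at an arbitrary nearby point $y$, and the fact that the maximal function in the paper is defined via cubes rather than balls. This is handled by noting that every ball $B(x_0, R)$ with $R \ge 2r$ is contained in a cube $Q$ containing $y$ with $|Q| \lesssim R^d$, so $\frac{1}{|B(x_0,R)|}\int_{B(x_0,R)} w \lesssim \frac{1}{|Q|}\int_Q w \le M(w)(y)$, with constants depending only on $d$. One should also observe that the implicit constant in the final bound depends on $\a$ (through the summability of the geometric series), which is consistent with the convention fixed earlier in the paper.
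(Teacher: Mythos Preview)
Your proof is correct and follows essentially the same route as the paper's own argument: a dyadic decomposition of $\{|x-x_0|>r\}$ into annuli, replacement of each annular integral by the corresponding ball average, and summation of the geometric series $\sum_k 2^{-k\a}$, followed by the $A_1$ condition for the second part. Your extra remark reconciling ball averages with the cube-based maximal function is a small clarification the paper leaves implicit.
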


 \begin{proof}
We have
  \begin{align*}
  \int_{|x-x_0|>r}\frac{w(x)dx}{|x-x_0|^{d+\a}}
  &\le  r^{-\a}\sum_{s=0}^\8 2^{-\a s}\big[\frac1{(2^{s}r)^d}\int_{2^sr<|x-x_0|\le 2^{s+1}r}w(x)dx\big]\\
  &\lesssim r^{-\a}\sum_{s=0}^\82^{-\a s}\big[\frac1{(2^{s+1}r)^d}\int_{|x-x_0|\le 2^{s+1}r}w(x)dx\big]\\
  & \lesssim r^{-\a} M(w)(y)\;\textrm{ for }\;  y\in x_0+B_r.
   \end{align*}
 The assertion is thus proved. 
  \end{proof}

Before  proceeding to the proof of the weak type $(1, 1)$ inequality of the operator $\V_q\K$, we need more notation. For an interval $I=(s,\,t]$ with   $0<s<t<\8$ we denote by $R_I$ the annulus $\{x\in\real^d: s<|x|\le t\}$ and let
 $$K_I(f)(x)=K_s(f)(x)-K_t(f)(x)=\int_{\real^d}K(x, y)\un_{R_I}(x-y)f(y)dy.$$

Let $f$ be a compactly supported integrable function on $\real^d$ and $\l>0$. We must control  the quantity $w(\{x: \V_{q}\K(f) (x) > \lambda \})$ by $\|f\|_{L^1(\real^d, w)}/\l$. By rescaling, we can assume that $\l=2$. Keeping the  notation in Lemma~\ref{CZD} (with $\l=2$),  we have
 $$
 w(\{x: \V_{q}\K(f) (x) > 2 \})\le w(\{x :\V_{q}\K(g)(x) >1 \})
 + w(\{x: \V_{q}\K(b) (x)> 1 \}).
 $$
We must control the two terms on the right hand side by $\int_{\real^d} |f(x)|w(x)dx$. It is here for the good part $g$ that we require that $w\equiv1$. Thus if $w\equiv1$, then by the $L^{p_0}$-boundedness of $\V_q\K$ we have
\begin{eqnarray}\label{good}
 \begin{array}{ccl}
  \begin{split} \begin{displaystyle}
 w(\{x :\V_{q}\K(g)(x) >1 \})\end{displaystyle}
 &=\begin{displaystyle}|\{x :\V_{q}\K(g)(x) >1 \}|\le\int_{\real^d} (\V_{q}\K(g))^{p_0}(x)dx\end{displaystyle}\\
 &\lesssim\begin{displaystyle}  \int_{\real^d} |g(x)|^{p_0}dx \lesssim \int_{\real^d} |f(x)|dx \end{displaystyle}.
 \end{split} \end{array}
 \end{eqnarray}
We will prove the above inequalities for a general $w\in A_1$ in the end of section~\ref{pf of pA}.

In the rest of this section, we again assume that $w$ is a general $A_1$ weight. We will treat the bad part and show
 $$ w(\{x: \V_{q}\K(b) (x)> 1 \})\lesssim \int_{\real^d} |f(x)|w(x)dx.$$
 A preliminary step toward this end is the following
 $$w(\{x: \V_{q}\K(b) (x)> 1 \}) \le w(\wt\O) +
 w(\{x :x \notin \wt\O, \V_{q}\K(b) (x) >1 \}).$$
By the doubling property of $w$ and the weak type $(1, 1)$ boundedness of $M$ for $A_1$ weights, we have
 $$
 w(\wt\O)\lesssim \sum_i w(Q_i) =w(\O)\le w(\{x\in\real: M(f)(x) >2\})
 \lesssim \int_{\real^d} |f(x)|w(x)dx.
 $$
So it remains to treat $w(\{x\in \wt\O^c: \V_{q}\K(b) (x) >1 \})$. For clarity we divide this technical part of the proof into several steps.

\medskip\n{\bf Step~1. Decomposition into interior and boundary sums}. Given  $x\notin \wt\O$ choose an increasing sequence $\{t_j\}$ such that
 $$\V_{q}\K(b) (x)\le 2\big(\sum_j\big|K_{(t_j,\,t_{j+1}]}(b)(x)\big|^q\big)^{1/q}.$$
Note that the sequence $\{t_j\}$ depends on $x$, as well as the sets $\I_1(I)$ and $\I_2(I)$ below. But for notational simplicity we will not mention $x$ explicitly in $\{t_j\}$ or  $\I_k(I)$ ($k=1,2$), which should not cause any ambiguity. 

Let $I_j=(t_j,\,t_{j+1}]$. The intervals $I_j$'s are pairwise disjoint.  Note that $K_{I_j}(b)(x)\neq0$ only if $x+R_{I_j}$ meets some cube $Q_i$. We consider two cases according to  $Q_i\subset x+R_{I_j} $ or $Q_i\cap(x+\partial R_{I_j}) \neq\emptyset$. For a given interval $I$ let
 $$\I_1(I)=\{i:  Q_i\subset x+R_I \}\quad\textrm{and}\quad
 \I_2(I)=\{i:Q_i\cap(x+\partial R_I) \neq\emptyset\}.$$
Then
 $$\big(\sum_j\big|K_{I_j}(b)(x)\big|^q\big)^{1/q}\le
  \big(\sum_{j}\big|\sum_{i\in\I_1(I_j)}K_{I_j}(b_i)(x)\big|^q\big)^{1/q}
   + \big(\sum_{j}\big|\sum_{i\in\I_2(I_j)} K_{I_j}(b_i)(x)\big|^q\big)^{1/q}.$$
According to \cite{CJRW2}, the first sum on the right-hand side is called the interior sum and the second the boundary sum. It then follows that
 \begin{align*}
  w(\{x \in \wt\O^c: \V_{q}\K(b) (x) > 1 \})
  &\le w(\{x \in \wt\O^c:  \sum_{j}\big|\sum_{i\in\I_1(I_j)}K_{I_j}(b_i)(x)\big|^q> \frac1{2^q} \})\\
   &+
   w(\{x \in \wt\O^c:  \sum_{j}\big|\sum_{i\in\I_2(I_j)}K_{I_j}(b_i)(x)\big|^q > \frac1{2^q} \}).
   \end{align*}
For the two last terms,   we will use  the $\el^1$ and $\el^2$ norms instead of the $\el^q$ norm, respectively:
 \begin{align*}
 \big(\sum_{j}\big|\sum_{i\in\I_1(I_j)}K_{I_j}(b_i)(x)\big|^q\big)^{1/q}&\le \sum_{j}\big|\sum_{i\in\I_1(I_j)}K_{I_j}(b_i)(x)\big|,\\
 \big(\sum_{j}\big|\sum_{i\in\I_2(I_j)}K_{I_j}(b_i)(x)\big|^q\big)^{1/q}&\le \big(\sum_{j}\big|\sum_{i\in\I_2(I_j)}K_{I_j}(b_i)(x)\big|^2\big)^{1/2}.
 \end{align*}
Thus we are led to proving the following two inequalities
 \begin{align*}
 &w(\{x \in \wt\O^c: \sum_{j}\big|\sum_{i\in\I_1(I_j)}K_{I_j}(b_i)(x)\big|> \frac1{2} \})\lesssim \int_{\real^d}|f(y)|w(y)dy,\\
 &w(\{x \in \wt\O^c:  \sum_{j}\big|\sum_{i\in\I_2(I_j)}K_{I_j}(b_i)(x)\big|^2> \frac1{4} \})\lesssim \int_{\real^d}|f(y)|w(y)dy.
  \end{align*}
 The first on the interior sum is easy and will be done in step~2. The second on the boundary sum is much harder and will be handled in steps~3-5.

\medskip\n{\bf Step~2. Estimate on the interior sum}.   Let $i\in\I_1(I_j)$, that is $Q_i\subset  x+R_{I_j}$. Since $b_i$  is of vanishing mean, we have
 $$K_{I_j}(b_i)(x)=\int_{\real^d}K(x, y)b_i(y)dy=\int_{\real^d}\big(K(x, y)-K(x, c_i)\big)b_i(y)dy,$$
where $c_i$ is the center of $Q_i$. Therefore, for $x\notin\wt\O$ by $({\rm K}_2$) we get
 \begin{align*}
 \sum_{j}\big|\sum_{i\in\I_1(I_j)}K_{I_j}(b_i)(x)\big|
 &\le\sum_i\int_{Q_i}\big|K(x, y)-K(x, c_i)\big|\,|b_i(y)|dy\\
 &\lesssim \sum_i\frac{l_i^\d}{|x-c_i|^{d+\d}}\int_{Q_i}|b_i(y)|dy\\
 &\lesssim \sum_i\frac{l_i^\d}{|x-c_i|^{d+\d}}\int_{Q_i}|f(y)|dy,
 \end{align*}
where $l_i$ denotes the side length of $Q_i$. Thus by Lemma~\ref{standard}, we deduce
 \begin{align*}
  w(\{x \in \wt\O^c : \sum_{j}\big|\sum_{i\in\I_1(I_j)}K_{I_j}(b_i)(x)\big| > \frac1{2} \})
  &\lesssim\int_{\wt\O^c} \sum_{j}\big|\sum_{i\in\I_1(I_j)}K_{I_j}(b_i)(x)\big|w(x)dx\\
 &\lesssim \int_{\wt\O^c}\sum_i\frac{l_i^\d}{|x-c_i|^{d+\d}}\int_{Q_i}|f(y)|dyw(x)dx\\
 &\lesssim\sum_i l_i^\d\int_{Q_i}|f(y)|\big[\int_{|x-c_i|>l_i}\frac{w(x)dx}{|x-c_i|^{d+\d}}\big]dy\\
 &\lesssim\sum_i \int_{Q_i}|f(y)|w(y)dy\\
 & \lesssim \int_{\real^d}|f(y)|w(y)dy.
  \end{align*}

\medskip\n{\bf Step~3. Separation of the boundary sum into long and short variations}.  We will handle this part by passing through long and short variations. For each $j$,  consider two cases:
 \begin{enumerate}[$\bullet$]
 \item Case 1: $I_j$ does not contain  any power of $2$;
 \item Case 2: $I_j$ contains powers of $2$.
 \end{enumerate}
In case~1, $I_j\subset (2^k,\, 2^{k+1}]$ for some $k\in\ent$. In case~2, letting $m_j=\min\{k: 2^k\in I_j\}$ and $n_j=\max\{k: 2^k\in I_j\}$, we divide $I_j$ into three subintervals: $(t_j,\, 2^{m_j}],\, (2^{m_j},\, 2^{n_j}]$ and $(2^{n_j},\, t_{j+1}]$ (noting that if $m_j=n_j$, the middle interval is empty). Then
 $$|K_{I_j}(b)(x)|^2\le 3\,\big(
 |K_{(t_j,\, 2^{m_j}]}(b)(x)|^2+|K_{(2^{m_j},\, 2^{n_j}]}(b)(x)|^2+|K_{(2^{n_j},\, t_{j+1}]}(b)(x)|^2\big).$$
We need only to keep the subintervals whose associated annuli translated by $x$ intersect some $Q_i$.  Accordingly, we introduce two  collections of intervals:
  \begin{enumerate}[$\bullet$]
 \item $\S$ consists of all intervals in case~1, and all $(t_j,\, 2^{m_j}]$, $(2^{n_j},\, t_{j+1}]$ in case~2 if their associated annuli translated by $x$ intersect some $Q_i$;
 \item $\L$ consists of all intervals  $(2^{m_j},\, 2^{n_j}]$ resulting in case~2 if their associated annuli translated by $x$ intersect some $Q_i$.
 \end{enumerate}
Note that $\S\cup\L$ is a disjoint family of intervals and for each $I\in\S\cup\L$ we have that $x+R_I$ contains no any $Q_i$ but meets some $Q_{i'}$.

The above discussion leads to
  \begin{align*}
 \big(\sum_{j}\big|\sum_{i\in\I_2(I_j)}K_{I_j}(b_i)(x)\big|^2\big)^{1/2}
 &\le \sqrt3 \big(\sum_{I\in\L}\big|\sum_{i\in\I_2(I)}K_{I}(b_i)(x)\big|^2\big)^{1/2}\\
 & +  \sqrt3 \big(\sum_{I\in\S}\big|\sum_{i\in\I_2(I)}K_{I}(b_i)(x)\big|^2\big)^{1/2} .
  \end{align*}
 The first sum on the right is the long variation and the second the short variation. Thus
 \begin{align*}
 w(\{x \in \wt\O^c: \sum_{j}\big|\sum_{i\in\I_2(I_j)}K_{I_j}(b_i)(x)\big|^2 >\frac1{4} \})
 &\le  w(\{x \in \wt\O^c: \sum_{I\in\L}\big|\sum_{i\in\I_2(I)}K_{I}(b_i)(x)\big|^2 >\frac1{48} \})\\
 &+
 w(\{x \in \wt\O^c:\sum_{I\in\S}\big|\sum_{i\in\I_2(I)}K_{I}(b_i)(x)\big|^2>\frac1{48} \}).
  \end{align*}
The last two measures on the long and short variations will be estimated in step~4 and step~5, respectively.

\medskip\n{\bf Step~4. Estimate of the long  variation}.    Let $I=(2^m,\, 2^n]\in\L$ (with $m<n$) and   $i\in\I_2(I)$. Then $Q_i$ intersects one (and only one) of the two spheres $x+\{y:|y|=2^m\}$ and $x+\{y:|y|=2^n\}$. If $Q_i$ intersects the former, then  $2^m>2\sqrt  d\,l_i$. Indeed, there exists $y\in Q_i$ such that $|y-x|=2^m$. Since  $x \notin \wt\O$, we have
 $$2^m=|x-y|\ge |x-c|-|y-c|>5\sqrt  d\,\frac{l_i}2 -\sqrt d\,\frac{l_i}2=2\sqrt  d\,l_i.$$
 Thus for any $z\in Q_i$,
  \begin{align*}
   |z-x|&\le|y-x| +|z-y|\le2^m+  \sqrt d\,l_i <2^{m+1},\\
   |z-x|&\ge|y-x| -|z-y|\ge2^m- \sqrt d\,l_i >2^{m-1}\,.
   \end{align*}
 Consequently, 
 $$Q_i\subset x+R_{(2^{m-1},\,2^{m+1}]}\subset \bigcup_{k=m-1}^{m+1} (x+R_{(2^k,\, 2^{k+1}]})\,.$$
We have  a similar assertion if $Q_i\cap(x+\{y:|y|=2^n\})\neq\emptyset$. Also note that if the latter case happens,  $Q_i$ cannot intersect $x+\{y:|y|=2^m\}$. Hence
 $$Q_i\subset \bigcup_{k=m-1}^{m+1} (x+R_{(2^k,\, 2^{k+1}]})\;\textrm{ or }\;
 Q_i\subset \bigcup_{k=n-1}^{n+1} (x+R_{(2^k,\, 2^{k+1}]}).$$
This shows that $Q_i$ does not meet $x+R_{(2^k,\, 2^{k+1}]}$ for any integer $k\in [m+1,\, n-1)$. Thus for such a $k$ we must have
 $$K_{(2^k,\, 2^{k+1}]}(b_i)(x)=0.$$
Therefore, we deduce that
 \begin{align*}
 \big|\sum_{i\in\I_2(I)}K_{I}(b_i)(x)\big|^2
 & \le 2 \sum_{k=m}^{n-1}\big|\sum_{i\in\I_2(I)}K_{(2^k,\, 2^{k+1}]}(b_i)(x)\big|^2\\
 &  \le 2 \sum_{k=m}^{n-1}\big(\sum_{i\in \I_2(D_k)}\big| K_{D_k}(b_i)(x)\big|\big)^2,
 \end{align*}
  where $D_k=(2^k,\, 2^{k+1}]$. Note that if $i\in \I_2(I)$, then the above term $K_{(2^k,\, 2^{k+1}]}(b_i)(x)\neq0$ only if $k=m$ or $n-1$.  We do this procedure for every $I\in\L$ and sum up all inequalities so obtained. Consequently, we have
  $$\sum_{I\in\L}\big|\sum_{i\in\I_2(I)}K_{I}(b_i)(x)\big|^2
   \le 2 \sum_{I\in\L}\sum_{k=m(I)}^{n(I)-1}\big(\sum_{i\in \I_2(D_k)}\big|K_{D_k}(b_i)(x)\big|\big)^2,$$
where $I=(2^{m(I)},\, 2^{n(I)}]$.  For any $k\in \ent$, let $\I_{2, k}=\I_2(D_k)$. Noting that the intervals in $\L$ are pairwise disjoint, we get
  \beq\label{long}
  \sum_{I\in\L}\big|\sum_{i\in\I_2(I)}K_{I}(b_i)(x)\big|^2
  \le 2\sum_{k\in\ent}\big(\sum_{i\in\I_{2,k}}\big|K_{D_k}(b_i)(x)\big|\big)^2.
  \eeq
Thus
 \begin{align*}
 w(\{x \in \wt\O^c: \sum_{I\in\L}\big|\sum_{i\in\I_2(I)}K_{I}(b_i)(x)\big|^2 >\frac1{48} \})
 \lesssim
 \int_{\wt\O^c}\sum_{k\in\ent}\big(\sum_{i\in\I_{2,k}}\big|K_{D_k}(b_i)(x)\big|\big)^2w(x)dx.
  \end{align*}
 
 Now let $\I_{j}=\{i: l_i=2^j\}$ (recalling that $l_i$ is the side length of $Q_i$)  and define
 $$h_{k,j}(x)=\sum_{i\in\I_{2,k}\cap\I_j}\big|K_{D_k}(b_i)(x)\big|.$$
It is important to note that if $h_{k,j}(x)\neq0$, then $k>j$. We have
 $$\sum_{k\in\ent}\big|\sum_{i\in\I_{2,k}}K_{D_k}(b_i)(x)\big|^2
 \le\sum_{k\in\ent}\big(\sum_jh_{k,j}(x)\big)^2.$$
Therefore, we are led to proving
  \begin{align*}
  \sum_{k\in\ent}\int_{\wt\O^c}\big(\sum_jh_{k,j}(x)\big)^2w(x)dx
  &\lesssim \sum_i\int_{Q_i}|f(y)|w(y)dy\\
  &=\sum_j\sum_{i\in\I_j}\int_{Q_i}|f(y)|w(y)dy\\
  &\;{\mathop=^{\rm def}}\;\sum_j\Delta_j.
  \end{align*}
Thus we are  in the situation of applying Lemma~\ref{AOL} (with $r=2$). By that lemma, it suffices to show
 $$\int_{\wt\O^c}h_{k,j}(x)^2w(x)dx\lesssim 2^{j-k}\Delta_j$$
for all $k>j$. Let $x\in\wt\O^c$.  By $({\rm K}_0)$ and Lemma~\ref{CZD}, we have
   \begin{align*}
   h_{k,j}(x)^2
  &\lesssim \frac1{2^{2dk}}\big(\sum_{i\in\I_{2,k}\cap\I_j}\int_{Q_i}|b_i(y)|dy\big)^2\\
  &\lesssim \big(\sum_{i\in\I_{2,k}\cap\I_j}|Q_i|\big)\cdot
  \big(\frac1{2^{2dk}}\sum_{i\in\I_{2,k}\cap\I_j}\int_{Q_i}|f(y)|dy\big)\\
  &\lesssim 2^{(d-1)k+j}\sum_{i\in\I_{2,k}\cap\I_j}\frac1{|x-c_i|^{2d}}\int_{Q_i}|f(y)|dy
  \end{align*}
(recalling that $c_i$ is the center of $Q_i$). Here we have used two facts. The first  is  that if $\I_{2,k}\neq\emptyset$ (then necessarily $k>j$), then
 \beq\label{control Q}
 \sum_{i\in\I_{2,k}\cap\I_j}|Q_i|\le\sum_{i\in \I_2(D_k)\cap\I_j}|Q_i|\lesssim 2^{(d-1)k+j}.
 \eeq
The second one is that for $i\in\I_{2,k}\cap\I_j\neq\emptyset$
 \beq\label{dist to center}
 \frac1{|x-c_i|}\approx \frac1{2^k}.
 \eeq
On the other hand, by the discussion at the beginning of the present step, we have
 $$|x-c_i|> 2^{k-1},\quad \forall\; i\in\I_{2,k}\cap\I_j.$$
Therefore, by Lemma~\ref{standard}
 \begin{align*}
 \int_{\wt\O^c}h_{k,j}(x)^2w(x)dx
 &\lesssim 2^{(d-1)k+j}\sum_{i\in\I_{2,k}\cap\I_j}
  \int_{Q_i}|f(y)|\Big[\int_{\wt\O^c}\frac{\un_{|x-c_i|>2^{k-1}}}{|x-c_i|^{2d}}w(x)dx\Big]dy\\
 &\lesssim 2^{(d-1)k+j}\sum_{i\in\I_{2,k}\cap\I_j}
  \int_{Q_i}|f(y)|\Big[\int_{|x-c_i|>2^{k-1}}\frac{w(x)dx}{|x-c_i|^{2d}}\Big]dy\\
 &\lesssim 2^{j-k}\sum_{i\in\I_{2,k}\cap\I_j} \int_{Q_i}|f(y)|w(y)dy\\
 &\le2^{j-k}\Delta_j.
 \end{align*}
This is the announced estimate on the weighted $L^2$ norm of $h_{k,j}$. We have thus finished the proof for the long variation part.

\medskip\n{\bf Step~5. Estimate of the short  variation}.   The argument for this part is similar to  that of the preceding step. Let $\S_k=\{I\in\S: I\subset(2^k,\, 2^{k+1}]\}$ and define
 $$g_{k, j}(x)^2=\sum_{I\in\S_k}\big|\sum_{i\in\I_2(I)\cap\I_j}K_{I}(b_i)(x)\big|^2.$$
The indices $k$ and $j$ again satisfy  $k>j$. Then
  $$
  \sum_{I\in\S}\big|\sum_{i\in\I_2(I)}K_{I}(b_i)(x)\big|^2
  = \sum_k\sum_{I\in\S_k}\big|\sum_{i\in\I_2(I)}K_{I}(b_i)(x)\big|^2
  \le \sum_k\big(\sum_jg_{k, j}(x)\big)^2.$$
  Thus
  \begin{align*}
  w(\{x \in \wt\O^c: \sum_{I\in\S}\big|\sum_{i\in\I_2(I)}K_{I}(b_i)(x)\big|^2>\frac1{48} \})
  &\lesssim \int_{\wt\O^c}\sum_{I\in\S}\big|\sum_{i\in\I_2(I)}K_{I}(b_i)(x)\big|^2w(x)dx\\
  &\lesssim \int_{\wt\O^c}\sum_k\big(\sum_jg_{k, j}(x)\big)^2w(x)dx.
  \end{align*}
Thanks again to Lemma~\ref{AOL}, we only need to show
  $$\int_{\wt\O^c}g_{k, j}(x)^2w(x)dx\lesssim 2^{j-k}\Delta_j,$$
where  $\Delta_j$ is the same as in the previous step. For $x\in\wt\O^c$, by  \eqref{control Q}, \eqref{dist to center} and the discussion following \eqref{dist to center}, we get
   \begin{align*}
   g_{k,j}(x)^2
  &\lesssim \frac1{2^{2dk}}\sum_{I\in\S_k}\big(\sum_{i\in\I_2(I)\cap\I_j}\int_{Q_i}\un_{x+R_I}(y)|b_i(y)|dy\big)^2\\
  &\lesssim \frac1{2^{2dk}}\sum_{I\in\S_k}\big(\sum_{i\in\I_2(I)\cap\I_j}|Q_i|\big)\cdot
  \big(\sum_{i\in\I_2(I)\cap\I_j}\int_{Q_i}\un_{x+R_I}(y)|b_i(y)|dy\big)\\
  &\lesssim 2^{(d-1)k+j}\sum_{I\in\S_k}\sum_{i\in\I_2(I)\cap\I_j}\frac1{2^{2dk}}\int_{Q_i}\un_{x+R_I}(y)|b_i(y)|dy\\
  &\lesssim 2^{(d-1)k+j}\sum_{I\in\S_k}\sum_{i\in\I_2(I)\cap\I_j}\frac{\un_{|x-c_i|>2^{k-1}}}{|x-c_i|^{2d}}\int_{Q_i}\un_{x+R_I}(y)|b_i(y)|dy\\
  &\lesssim 2^{(d-1)k+j}\sum_{i\in\I_j}\frac{\un_{|x-c_i|>2^{k-1}}}{|x-c_i|^{2d}}\sum_{I\in\S_k}\int_{Q_i}\un_{x+R_I}(y)|b_i(y)|dy.
  \end{align*}
Since the intervals in $\S$ are disjoint, so are the associated annuli. Thus
   \begin{align*}
   g_{k,j}(x)^2
  &\lesssim 2^{(d-1)k+j}\sum_{i\in\I_j}\frac{\un_{|x-c_i|>2^{k-1}}}{|x-c_i|^{2d}}\int_{Q_i}|b_i(y)|dy\\
  &\lesssim 2^{(d-1)k+j}\sum_{i\in\I_j}\frac{\un_{|x-c_i|>2^{k-1}}}{|x-c_i|^{2d}}\int_{Q_i}|f(y)|dy.
  \end{align*}
Integrating over $\wt\O^c$ and using Lemma~\ref{standard}, we then get the desired  weighted $L^2$ norm estimate of $g_{k,j}$. Hence, the estimate for the short variation is done.

Combining  the results proved in all preceding steps, we get
 $$w(\{x\in \wt\O^c:\V_{q}\K(b) (x) >1 \})\lesssim\int_{\real^d}|f(x)|w(x)dx,$$
so
 $$w(\{x : \V_{q}\K(b) (x) >1 \})\lesssim\int_{\real^d}|f(x)|w(x)dx.$$
This is the announced weighted estimate for the bad part $b$. Together with the unweighted estimate for the good part $g$ in the beginning of this proof, we finally prove the unweighted weak type $(1, 1)$ of $\V_q\K$.

The weighted weak type $(1, 1)$ of $\V_q\K$ will be proved in the end of section~\ref{pf of pA}.\cqd

\begin{rk}\label{close to 1}
 The $L^{p_0}$ boundedness in the assumption of Theorem~\ref{vq CZ} and the unweighted weak type $(1,1)$ of $\V_q\K$ just proved imply, via Marcinkiewicz's interpolation, that $\V_q\K$ is of type $(p, p)$ for any $1<p<p_0$.
 \end{rk}


\section{Proof of Theorem~\ref{vq CZ}: type $(p,p)$ and $L^\8$-BMO boundedness}\label{pf of pA}


This section is devoted to the proof of parts (ii) and (iii) of Theorem~\ref{vq CZ}. For a function $f$ on $\real^d$ and $r>1$ let $M_r(f)=M(|f|^r)^{1/r}$. Both (ii) and (iii)   will easily follow from the following inequality
 \beq\label{sharp K}
 \big(\V_q\K(f)\big)^{\sharp}\lesssim M_r(f)
 \eeq
for $1<r<\min(p_0, \, q)$.  Assuming \eqref{sharp K} for the moment, let us show (ii) and (iii). Let $w\in A_p$. It is well known that $w\in A_{p/r}$ too for some $r>1$. Then by \cite[Theorem~IV.2.20]{gar-rubio}, we have
 \begin{align*}
 \int_{\real^d}\big(\V_q\K(f)(x)\big)^pw(x)dx
 &\lesssim\int_{\real^d}\big((\V_q\K(f))^{\sharp}(x)\big)^pw(x)dx\\
 &\lesssim \int_{\real^d}\big(M_r(f)(x)\big)^pw(x)dx
 \lesssim \int_{\real^d}|f(x)|^pw(x)dx.
\end{align*}
Thus (ii) is proved. On the other hand,   assume that $w^{-1}\in A_1$. Choose $r>1$ such that $w^{-r}\in A_1$ too. Then for any cube $Q$  we have
 $$\frac1{|Q|}\int_Q|f(y)|^rdy
 \le\big\|fw\big\|_\8^r\frac1{|Q|}\int_Qw(y)^{-r}dy\lesssim \big\|fw\big\|_\8^r\,w(x)^{-r}\;\textrm{ for }\; a.e.\; x\in Q.$$
It follows that
 $$\big\| \big(\V_q\K(f)\big)^{\sharp}w\big\|_\8\lesssim \big\| M_r(f)w\big\|_\8\lesssim \big\| fw\big\|_\8.$$
This is the desired estimate in (iii)

\smallskip

Now we must prove  \eqref{sharp K}. To this end fix a compactly supported integrable function $f$ on $\real^d$ and a point $x_0\in\real^d$. We want to show
 $$ \left(\V_q\K(f)\right)^{\sharp}(x_0)\lesssim M_r(f) (x_0).$$
Recall that
 $$( \V_q\K(f))^{\sharp}(x_0)=
 \sup_{Q} \frac{1}{|Q|} \int_Q\Big| \V_q\K(f)(x)-\frac{1}{|Q|}\int_Q  \V_q\K(f)(y)dy\Big|dx,$$
where the supremum runs over all cubes $Q$ containing $x_0$. Fix such a cube $Q$ and let $c$ denote its center. Write $f=f_1+f_2$ with $f_1=f\un_{\wt Q}$ and $f_2=f\un_{\wt Q^c}$. Then
 \begin{align*}
  \frac{1}{|Q|}& \int_Q\Big| \V_q\K(f)(x)-\frac{1}{|Q|}\int_Q \V_q\K(f)(y)dy\Big|dx\\
 &\le  \frac{2}{|Q|} \int_Q\Big|\ \V_q\K(f)(x)- \V_q\K(f_2)(c)\Big|dx\\
 &\le \frac{2}{|Q|} \int_Q \V_q\K(f_1)(x)dx
 + \frac{2}{|Q|} \int_Q\|\K(f_2)(x)-\K(f_2)(c)\|_{v_q}dx\\
 &\;{\mathop=^{\rm def}}\;  D_1+D_2 .
 \end{align*}
We must show that $\max(D_1, \, D_2)\lesssim M_r(f)(x_0)$. This is easy for $D_1$. Indeed, by the H\"older inequality and the $L^r$-boundedness of $\mathcal V_q$ already observed in Remark~\ref{close to 1}, we have
  $$D_1\lesssim  \Big(\frac{1}{|Q|}\int_Q\big( \V_q\K(f_1)(x)\big)^rdx\Big)^{1/r}
   \lesssim \Big(\frac{1}{|Q|}\int_Q|f_1(x)|^rdx\Big)^{1/r}
   \lesssim M_r(f)(x_0).$$
To handle $D_2$  we will show
 \beq\label{D2}
 \big\|\K(f_2)(x)-\K(f_2)(c)\big\|_{v_r}\lesssim M_r(f)(x_0), \quad \forall\; x\in Q.
 \eeq
This will imply $D_2 \lesssim M_r(f)(x_0)$ since $\|\,\|_{v_q}\le \|\,\|_{v_r}$ for $r<q$. Fix an increasing sequence $\{t_j\}_{j\geq 0}$ of positive numbers and let $I_j=(t_j, \,t_{j+1}]$. Then
 \begin{align*}
  K_{I_j}(f_2)(x)-K_{I_j}(f_2)(c)
  &=\int_{\real^d}\big[K(x, y)\un_{R_{I_j}}(x-y)-K(c, y)\un_{R_{I_j}}(c-y)\big]f_2(y)dy\\
  &=\int_{\real^d}\big[K(x, y)-K(c, y)\big]\un_{R_{I_j}}(x-y)f_2(y)dy\\
  &\quad+\int_{\real^d}K(c, y)\big[\un_{R_{I_j}}(x-y)-\un_{R_{I_j}}(c-y)\big]f_2(y)dy\\
  &\;{\mathop=^{\rm def}}\; \a_j+\b_j.
  \end{align*}
The first term $\a_j$ is easy to estimate. Indeed, by $({\rm K}_1)$ and Lemma~\ref{standard}
 \begin{align*}
 \big(\sum_{j=0}^{\infty}|\a_j|^r\big)^{1/r}
 &\le \sum_{j=0}^{\infty}|\a_j|\\
 &\le  \sum_{j=0}^{\infty}\int_{\real^d}\big|K(x, y)-K(c, y)\big|\un_{R_{I_j}}(x-y)|f_2(y)|dy\\
 &\lesssim \sum_{j=0}^{\infty}\int_{\real^d}\frac{|x-c|^\d}{|y-c|^{d+\d}}\,\un_{R_{I_j}}(x-y)|f_2(y)|dy\\
 &\le |x-c|^\d\int_{|y-c|>l}\frac{1}{|y-c|^{d+\d}}|f(y)|dy\\
 &\lesssim M(f)(x_0)\le M_r(f)(x_0).
 \end{align*}
where $l$ denotes the side length of $Q$. Thus
 \beq\label{a}
 \big(\sum_{j=0}^{\infty}|a_j|^r\big)^{1/r}\lesssim  M_r(f)(x_0).
 \eeq

To deal with the second term $\b_j$ we introduce the following sets
 \beq\label{partition J}
 J_1=\big\{j\;:\; t_{j+1}-t_j\le |x-c|\big\}\quad\textrm{and}\quad J_2=\big\{j\;:\; t_{j+1}-t_j> |x-c|\big\}.
 \eeq
Then
 \beq\label{un J}
 \big|\un_{R_{I_j}}(x-y)-\un_{R_{I_j}}(c-y)\big| \le\un_{R_{I_j}}(x-y)+\un_{R_{I_j}}(c-y),j\in J_1
 \eeq
and
 \begin{align*}
  \big|\un_{R_{I_j}}(x-y)-\un_{R_{I_j}}(c-y)\big|
 &\le \un_{R_{(t_j,\, t_j+|x-c|}]}(x-y) +\un_{R_{(t_{j+1}, \,t_{j+1}+|x-c|]}}(x-y)\\
 &+ \un_{R_{(t_j,\, t_j+|x-c|]}}(c-y) +\un_{R_{(t_{j+1}, \,t_{j+1}+|x-c|]}}(c-y), j\in J_2.
 \end{align*}
We first consider the part on $J_1$. By $({\rm K}_0)$ and the H\"older inequality
 \begin{align*}
 \sum_{j\in J_1}|\b_j|^r
 &\lesssim \sum_{j\in J_1}\big( \int_{\real^d}\big|K(c, y)|\big(\un_{R_{I_j}}(x-y)+\un_{R_{I_j}}(c-y)\big)|f_2(y)|dy\big)^r\\
 &\lesssim \sum_{j\in J_1} (t_{j+1}^d-t_j^d)^{r-1}\int_{\real^d}\frac1{|y-c|^{dr}}\,\big(\un_{R_{I_j}}(x-y)+\un_{R_j}(c-y)\big)|f_2(y)|^rdy\\
 &\lesssim |x-c|^{r-1}\sum_{j\in J_1} t_{j+1}^{(d-1)(r-1)}\int_{\real^d}\frac1{|y-c|^{dr}}\,\big(\un_{R_{I_j}}(x-y)+\un_{R_{I_j}}(c-y)\big)|f_2(y)|^rdy.
  \end{align*}
For any $y\in\wt Q^c$ let $j(y)$ be the unique $j\in J_1$  such that $t_j<|x-y|\le t_{j+1}$ (if such a $j$ exists). Here we have used the pairwise disjointness of the annuli $R_{I_j}$'s.  Then
 $$t_{j(y)+1}\le t_{j(y)}+|x-c|\le 2|x-y|\lesssim |y-c|.$$
Thus  by Lemma~\ref{standard}
 \begin{align*}
 \sum_{j\in J_1} t_{j+1}^{(d-1)(r-1)}\int_{\real^d}\frac1{|y-c|^{dr}}\,\un_{R_{I_j}}(x-y)|f_2(y)|^rdy
 &=\int_{\real^d}\frac{ t_{j(y)+1}^{(d-1)(r-1)}}{|y-c|^{dr}}\,|f_2(y)|^{r}dy\\
 &\lesssim \int_{\real^d}\frac{1}{|y-c|^{d+r-1}}\,|f_2(y)|^rdy\\
 &\lesssim\int_{|y-c|>l}\frac1{|y-c|^{d+r-1}}|f(y)|^rdy\\
 &\lesssim l^{1-r}(M_r(f)(x_0))^r.
\end{align*}
This yields the desired estimate on the terms containing $\un_{R_{I_j}}(x-y)$. Taking $x=c$, we get the same estimate for  the terms containing $\un_{R_{I_j}}(c-y)$. Therefore,
 \beq\label{b1}
 \big(\sum_{j\in J_1}|\b_j|^r\big)^{1/r}\lesssim M_r(f)(x_0).
 \eeq
The part on $J_2$ is treated in a similar way. Indeed,
 \begin{align*}
 \sum_{j\in J_2}|\b_j|^r
 &\lesssim \sum_{j\in J_2} \big(\int_{\real^d}\big|K(c,y)|\big(\un_{R_{t_j, t_j+|x-c|}}(x-y) +\un_{R_{t_j, t_j+|x-c|}}(c-y) \big)|f_2(y)|dy\big)^r\\
 &\lesssim |x-c|^{r-1} \sum_{j\in J_2}(t_j+|x-c|)^{(d-1)(r-1)}\\
 &\quad \cdot\int_{|y-c|>l}\frac1{|y-c|^{dr}}
 \big(\un_{R_{t_j, t_j+|x-c|}}(x-y) +\un_{R_{t_j, t_j+|x-c|}}(c-y) \big)|f_2(y)|^rdy.
   \end{align*}
Since the family $\{x+R_{t_j, t_j+|x-c|}\}_{j\in J_2}$ is disjoint, for any $y$ there exists at most one $j\in J_2$ such that $t_j<|x-y|\le t_j+|x-c|$. Denote such a $j$ still by $j(y)$. Then we have $t_j+|x-c|\lesssim |y-c|$ as in the preceding case for $J_1$. Thus we conclude as before that
 \beq\label{b2}
 \big(\sum_{j\in J_2}|\b_j|^r\big)^{1/r}\lesssim M_r(f)(x_0).
 \eeq
 Combining \eqref{a},  \eqref{b1} and \eqref{b2}, we get
  $$\big(\sum_j\big|K_{I_j}(f_2)(x)-K_{I_j}(f_2)(c)\big|^r\big)^{1/r}\lesssim M_r(f)(x_0).$$
 Taking the supremum over all increasing sequences $\{t_j\}$ yields \eqref{D2}. We have thus proved  part (ii) of Theorem~\ref{vq CZ}. \cqd

\medskip\n{\bf End of the proof of part (i).}  Let us go back to the full generality of part (i). As already noted in section~\ref{pf of weakA}, the only missing point is the weighted estimate for the good part $g$ in \eqref{good}. The ingredient for this estimate is the weighted type $(p_0, p_0)$ of $\V_q\K$ with respect to any weighted $w\in A_{p_0}$. Now  part (ii)  makes this at our disposal. So for $w\in A_1\subset A_p$, by the properties of $g$ in Lemma~\ref{CZD} and the weak type $(1, 1)$ of the maximal operator $M$, we have
  \begin{align*}
 w(\{x :\V_{q}\K(g)(x) >1 \})
 &\le\int_{\real^d} (\V_{q}\K(g))^{p_0}(x)w(x)dx\lesssim  \int_{\real^d} |g(x)|^{p_0}w(x)dx\\
 &\lesssim\int_{\O^c} |g(x)|w(x)dx +w(\O) \lesssim \int_{\real^d} |f(x)|w(x)dx.
 \end{align*}
Thus \eqref{good} also holds in the weighted case, so we have proved part (i) too. Thus the proof of Theorem~\ref{vq CZ} is complete. \cqd

\medskip
Reexamining the proof of Theorem~\ref{vq CZ} we get the following

\begin{rk}
 Assume that $\V_q\K$ is bounded on $L^{p_0}(\real^d)$ for some $1<p_0<\8$.
 \begin{enumerate}[(i)]
 \item If the kernel $K$ satisfies $({\rm K}_0)$ and $({\rm K}_2)$, then $\V_q\K$ is bounded  on $L^{p}(\real^d, w)$ for $1<p<p_0$ and $w\in A_p$, and from $L^{1}(\real^d, w)$ to $L^{1, \8}(\real^d, w)$ for $w\in A_1$.
 \item If the kernel $K$ satisfies $({\rm K}_0)$ and $({\rm K}_1)$, then $\V_q\K$ is bounded  $L^{p}(\real^d, w)$ for $p_0<p<\8$ and $w\in A_{p/p_0}$
 \end{enumerate}
\end{rk}


\section{Proof of Theorem~\ref{vq D}: type $(p, p)$ and $L^\8$-BMO boundedness}\label{pf of pB}


In this section we prove parts (ii) and (iii) of Theorem~\ref{vq D}. As in section~\ref{pf of pA}, it suffices to show
 \beq\label{DMr}
  \left(\V_q\A(f)\right)^{\sharp}\lesssim M_r(f)
  \eeq
for $r>1$ close to $1$. Fix a compactly supported  integrable function $f$ on $\real^d$ and a point $x_0\in\real^d$. Let $Q$ be a cube containing $x_0$. Let $c$ and $l$ denote respectively the center and side length of $Q$. We then  decompose $f$ as $f=f_1+f_2$ with $f_1=f\un_{\wt Q}$ and $f_2=f\un_{\wt Q^c}$. The part on $f_1$ is treated by using the boundedness of $\V_q\A$ on $L^2(\real^d)$ from \cite{jrw-high}. For the part on $f_2$ we will prove the following pointwise estimate
 \beq\label{Ineq:sharp2}
 \big\|\A (f_2)(x)-\A (f_2)(c)\big\|_{v_r}\lesssim M_r(f)(x_0)\quad \forall\;x\in Q,
 \eeq
which, in turn, will imply \eqref{DMr}.

Fix $x\in Q$. Note that $A_t(f_2)(x)=0$ for $t\le l$ (in fact, for  $t\le 2\sqrt d\,l$). So only the values of  $t$ greater than $l$ are relevant. Given an interval $I=(s,\,t]$, put $A_{I}(f)=A_{t}(f)-A_{s}(f)$ as before for singular integral operators. Let $\{t_j\}_j$ be an increasing sequence with $t_0>l$. Set $I_j=(t_j,\,t_{j+1}]$. Then
 \beq\label{dec}
  A_{I_j}(f_2)(x)-A_{I_j}(f_2)(c)=\xi_j+\eta_j,
 \eeq
where
 \begin{align*}
 \xi_j&=\frac{1}{|B_{t_{j+1}}|}\int_{\real^d} f_2(y)\big[\un_{R_{I_j}}(y-x)-\un_{R_{I_j}}(y-c)\big]dy,\\
 \eta_j&=\Big(\frac1{|B_{t_{j+1}}|}-\frac1{|B_{t_{j}}|}\Big)\int_{\real^d} f_2(y)\big[\un_{B_{t_j}}(y-x)-\un_{B_{t_j}}(y-c)\big]dy.
 \end{align*}
To handle  the $\xi_j$'s. we use the partition given by \eqref{partition J}. Then we have to show
 \beq\label{Ineq:xir}
 \big(\sum_{j\in J_1}|\xi_j|^r\big)^{1/r}\lesssim M_r(f)(x_0)\;\textrm{ and }\;
 \big(\sum_{j\in J_2}|\xi_j|^r\big)^{1/r}\lesssim M_r(f)(x_0).
 \eeq
Let us deal with only the part on $J_1$. Using \eqref{un J}, we need only to consider the terms on $x$ since those on $c$ are their special cases when $x=c$. Let $j(y)$ be the unique $j$ satisfying $t_{j(y)}\leq |y-x|<t_{j(y)+1}$ for a given $y\in \wt Q^c$. Then
  \begin{align*}
 \sum_{j\in J_1}\frac{1}{|B_{t_{j+1}}|^r}
 &\Big|\int_{\real^d} f_2(y)\un_{R_{I_j}}(y-x)dy\Big|^r\\
 &\lesssim\sum_{j\in J_1}\frac{1}{| t_{j+1}|^{dr}}(t_{j+1}^d-t_{j}^d)^{ r-1}\int_{\real^d} |f_2(y)|^r\un_{R_{I_j}}(y-x)dy\\
 &\lesssim|x-c|^{r-1}\sum_{j\in J_1} \frac{1}{ t_{j+1}^{d+r-1}}\int_{\real^d} |f_2(y)|^r\un_{R_{I_j}}(y-x)dy\\
 &\lesssim|x-c|^{r-1}\int_{\real^d} |f_2(y)|^r\frac{1}{t_{j(y)+1}^{d+r-1}}dy\\
 &\lesssim|x-c|^{r-1}\int_{|y-c|>l} |f(y)|^r\frac{1}{|y-c|^{d+r-1}}dy\\
 &\lesssim (M_r(f)(x_0))^r.
\end{align*}
This finishes the estimate on the $\xi_j$'s.

Now we turn to  the $\eta_j$'s. Observe that
 $$\big|\un_{B_{t_j}}(y-x)-\un_{B_{t_j}}(y-c)\big| \leq \un_{R_{(t_j, \,t_j+|x-c|]}}(y-x)+\un_{R_{(t_j, \,t_j+|x-c|]}}(y-c)$$
Recall that $t_j>l\ge |x-c|/\sqrt{d}$ for every $x\in Q$ by the assumption on the sequence $\{t_j\}$ at the beginning of the proof. Then by the H\"older inequality
 \begin{align*}
 \Big|\int_{\real^d} &f_2(y)\big[\un_{B_{t_j}}(y-x)-\un_{B_{t_j}}(y-x_0)\big]dy\Big|^r\\
 &\lesssim \big((t_{j}+|x-c|)^d-t_j^d\big)^{r-1}
 \int_{\real^d} |f_2(y)|^r\big(\un_{R_{(t_j, \,t_j+|x-c|]}}(y-x)+\un_{R_{(t_j,\, t_j+|x-c|]}}(y-c)\big)dy.
  \end{align*}
Hence as before, we deduce
 \begin{align*}
 \sum_j|\eta_j|^r
 &\lesssim\sum_j\frac{ |x-c|^{r-1}}{t_j^{d+r-1}}
  \int_{\real^d} |f_2(y)|^r\big(\un_{R_{(t_j, \,t_j+|x-c|]}}(y-x)+\un_{R_{(t_j,\, t_j+|x-c|]}}(y-c)\big)dy\\
 &\lesssim|x-c|^{r-1}\int_{|y-c|>l} |f(y)|^r\frac{1}{|y-c|^{d+r-1}}dy\\
 &\lesssim (M_r(f)(x_0))^r.
\end{align*}
Therefore, we have treated both $\xi_j$ and $\eta_j$ in \eqref{dec}.
Combining this inequality with \eqref{dec} and \eqref{Ineq:xir}, we finally get
 $$\big(\sum_j|A_{I_j}(f_2)(x)-A_{I_j}(f_2)(c)|^r\big)^{1/r} \lesssim M_r(f)(x_0);$$
whence \eqref{Ineq:sharp2}. Thus parts (ii) and (iii) of Theorem~\ref{vq D} is proved. \cqd


\section{Proof of Theorem~\ref{vq D}: weak type $(1,1)$}\label{pf of weakB}


This section is devoted to the proof of the weak type $(1, 1)$ inequality of Theorem~\ref{vq D}. This proof is similar to the one presented in section~\ref{pf of weakA}.  So we will only give the main lines and indicate the differences.

Let $w\in A_1$ and  $f$ be a compactly supported integrable function on $\real^d$. We want to show
 $$w(\{x: \V_{q}\A(f) (x) > \lambda \})\lesssim\frac1\l\int_{\real^d}f(x)w(x)dx,\quad\forall\;\l>0.$$
Let $f=g+b$ be the Calder\'on-Zygmund decomposition of $f$ given by Lemma~\ref{CZD} with the associated dyadic cubes $\{Q_i\}$. We keep all notation introduced in section~\ref{pf of weakA} relative to this decomposition.

As in the end of section~\ref{pf of pA}, the good part $w(\{x: \V_{q}\A(g) (x) > \l/2 \})$ is estimated by the boundedness of the operator $\V_q\A$ on $L^2(\real^d, w)$ proved in  section~\ref{pf of pB}. For the bad part  we need only to majorize the part of $w(\{x: \V_{q}\A(b) (x) > \l/2 \})$ outside of $\wt\O^c$. Thus our remaining task is to show the following inequality
 $$w(\{x\in\wt\O^c:\V_{q}\A(b) (x) > \frac\l2 \})\lesssim\frac1\l\int_{\real^d}f(x)w(x)dx.$$
Considering $4f/\l$ instead of $f$, we can assume that $\l=4$ in the rest of the section.

We start our majorization of $w(\{x\in\wt\O^c:\V_{q}\A(b) (x) >2 \})$ with an analysis of $A_I(b_i)(x)$ for an interval $I=(s,\, t]$.  Clearly, $A_I(b_i)(x)=0$ if $Q_i$ is outside of the ball $x+B_t$. On the other hand, since $b_i$ is of vanishing mean, $A_I(b_i)(x)=0$ if $Q_i$ is contained in the ball $x+B_s$ or in the annulus $x+R_I$. Thus $A_I(b_i)(x)\neq0$ only if $Q_i$ meets the boundary of $x+R_I$. This is a difference with the singular integrals: the interior sum disappears. So we denote $\I_2(I)$  simply by $\I(I)$:
 $$\I(I)=\{i: Q_i\cap(x+\partial R_I)\neq\emptyset\}.$$
$\I(I)$ depends on $x$ too. But for notational simplicity, we omit $x$ as an index in $\I(I)$ as well as in the sequence $\{t_j\}$ below.

Now for every $x\in \wt\O^c$ choose an increasing sequence $\{t_j\}$ such that
 $$\V_{q}\A(b) (x)\le 2\big(\sum_j\big|A_{(t_j,\,t_{j+1}]}(b)(x)\big|^q\big)^{1/q}.$$
Let $I_j=(t_j,\,t_{j+1}]$. By the above analysis, we have
 $$A_{I_j}(b)(x)=\sum_{i\in\I(I_j)}A_{I_j}(b_i)(x).$$
Then
 $$\V_{q}\A(b) (x)\le 2\big(\sum_j\big|\sum_{i\in\I(I_j)}A_{I_j}(b_i)(x)\big|^2\big)^{1/2}.$$
Thus we must show
 $$w(\{x\in\wt\O^c: \sum_j\big|\sum_{i\in\I(I_j)}A_{I_j}(b_i)(x)\big|^2> 1 \})\lesssim \int_{\real^d}f(x)w(x)dx.$$
Like in the case of singular integrals, we will do this via long and short variations.

Let $\L$ and $\S$ be the two collections of intervals associated to $\{t_j\}$ introduced in step~3 of section~\ref{pf of weakA}. Then
  \begin{align*}
 \big(\sum_{j}\big|\sum_{i\in\I(I_j)}A_{I_j}(b_i)(x)\big|^2\big)^{1/2}
 &\le \sqrt3 \big(\sum_{I\in\L}\big|\sum_{i\in\I(I)}A_{I}(b_i)(x)\big|^2\big)^{1/2}\\
 & +  \sqrt3 \big(\sum_{I\in\S}\big|\sum_{i\in\I(I)}A_{I}(b_i)(x)\big|^2\big)^{1/2} .
  \end{align*}
Hence
 \begin{align*}
 w(\{x\in\wt\O^c: \sum_{j}\big|\sum_{i\in\I(I_j)}A_{I_j}(b_i)(x)\big|^2 >1 \})
 &\le  w(\{x \in\wt\O^c: \sum_{I\in\L}\big|\sum_{i\in\I(I)}A_{I}(b_i)(x)\big|^2 >\frac1{12} \})\\
 &+
 w(\{x \in\wt\O^c: \sum_{I\in\S}\big|\sum_{i\in\I(I)}A_{I}(b_i)(x)\big|^2>\frac1{12} \}).
  \end{align*}

We first deal with the long  variation.    The geometrical analysis made at the beginning of step~4 in section~\ref{pf of weakA} remains valid now. Maintaining the notation there, we then have
  $$\sum_{I\in\L}\big|\sum_{i\in\I(I)}A_{I}(b_i)(x)\big|^2
  \le2 \sum_{k\in\ent}\big|\sum_{i\in\I_{2,k}}A_{D_k}(b_i)(x)\big|^2.$$
Recalling that $\I_{j}=\{i: l_i=2^j\}$, define again
 $$h_{k,j}(x)=\sum_{i\in\I_{2,k}\cap\I_j}\big|A_{D_k}(b_i)(x)\big|.$$
In order to apply Lemma~\ref{AOL}, we have to estimate $h_{k,j}(x)$. Note that the argument in step~4 of section~\ref{pf of weakA} for this estimate is purely geometrical  except one place where $({\rm K}_0)$ of the kernel $K$ is used. Now the corresponding differential operator kernel is
 $$\frac1{|B_{2^{k+1}}|}\,\un_{B_{2^{k+1}}}(x-y)-\frac1{|B_{2^{k}}|}\,\un_{B_{2^{k}}}(x-y)$$
and
 $$A_{D_k}(b_i)(x)=\frac1{|B_{2^{k+1}}|}\int_{\real^d}\un_{B_{2^{k+1}}}(x-y)b_i(y)dy
 -\frac1{|B_{2^{k}}|}\int_{\real^d}\un_{B_{2^{k}}}(x-y)b_i(y)dy.$$
Thus we still have
 $$|A_{D_k}(b_i)(x)|\lesssim\frac1{2^{dk}}\int_{Q_i}|b_i(y)|dy.$$
Then as in step~4 of section~\ref{pf of weakA}, we deduce
 $$\int_{\wt\O^c}h_{k,j}(x)^2w(x)dx\lesssim 2^{j-k}\Delta_j=2^{j-k}\sum_{i\in\I_j}\int_{Q_i}|f(y)|w(y)dy.$$
Therefore, by Lemma~\ref{AOL}, we get the desired estimate on the long variation:
 \begin{align*}
 &w(\{x \in \wt\O^c: \sum_{I\in\L}\big|\sum_{i\in\I(I)}A_{I}(b_i)(x)\big|^2 >\frac1{12} \})\\
 &\quad\lesssim  \int_{\wt\O^c}\sum_{k\in\ent}\big(\sum_jh_{k,j}(x)\big)^2w(x)dx\\
 &\quad\lesssim \int_{\real^d}|f(y)|w(y)dy.
  \end{align*}

We turn to the short  variation.    Define
 $$g_{k, j}(x)=\big(\sum_{I\in\S_k}\big|\sum_{i\in\I(I)\cap\I_j}A_{I}(b_i)(x)\big|^2\big)^{1/2}$$
(recalling that $\S_k=\{I\in\S: I\subset(2^k,\, 2^{k+1}]\}$). As in step~5 of section~\ref{pf of weakA} and the above argument for the long variation, we need only to show the following inequality
 $$ g_{k,j}(x)^2
 \lesssim 2^{(d-1)k+j}\sum_{i\in\I_j}\frac{\un_{|x-c_i|>2^{k-1}}}{|x-c_i|^{2d}}\int_{Q_i}|f(y)|dy,\quad\forall\; x\in\wt\O^c.$$
Let $I=(l(I),\, r(I)]\in\S_k$. Then
 \begin{align*}
 A_I(b_i)(x)
 &=\frac1{|B_{r(I)}|}\int_{\real^d}\un_{R_{I}}(x-y)b_i(y)dy
 +\big(\frac1{|B_{r(I)}|} -\frac1{|B_{l(I)}|}\big)\int_{\real^d}\un_{B_{l(I)}}(x-y)b_i(y)dy\\
 &\;{\mathop =^{\rm def}}\;A_I^{(1)}(b_i)(x)+A_I^{(2)}(b_i)(x).
 \end{align*}
Let
 $$g^{(\a)}_{k,j}(x)=\big(\sum_{I\in\S_k}\big|\sum_{i\in\I(I)\cap\I_j}A^{(\a)}_{I}(b_i)(x)\big|^2\big)^{1/2},\quad \a=1, 2.$$
Then $g_{k,j}(x)\le g^{(1)}_{k,j}(x)+g^{(2)}_{k,j}(x)$. Thus we are reduced to estimating $g^{(1)}_{k,j}(x)$ and $g^{(2)}_{k,j}(x)$ separately. The estimate of $g^{(1)}_{k,j}(x)$ is done in the same way as before for the singular integrals, since the kernel of $A_I^{(1)}$ behaves like a singular kernel as far as such an estimate is concerned.

Compared with the situation of section~\ref{pf of weakA}, the second term is  new. We have
 $$|A_I^{(2)}(b_i)(x)|\lesssim \frac{r(I)-l(I)}{2^{(d+1)k}}\int_{Q_i}|b_i(y)|dy.$$
Thus by \eqref{control Q},
   \begin{align*}
   g^{(2)}_{k,j}(x)^2
  &\lesssim \frac1{2^{2(d+1)k}}\sum_{I\in\S_k}\big(r(I)-l(I)\big)^2\big(\sum_{i\in\I_2(I)\cap\I_j}\int_{Q_i}|b_i(y)|dy\big)^2\\
  &\lesssim \frac1{2^{(2d+1)k}}\sum_{I\in\S_k}\big(r(I)-l(I)\big)\big(\sum_{i\in\I_2(I)\cap\I_j}|Q_i|\big)\cdot
  \big(\sum_{i\in\I_2(I)\cap\I_j}\int_{Q_i}|b_i(y)|dy\big)\\
  &\lesssim \frac{2^{(d-1)k+j}}{2^{(2d+1)k}}\sum_{I\in\S_k}\big(r(I)-l(I)\big)\sum_{i\in\I_j}\int_{Q_i}|b_i(y)|dy\\
  &\lesssim \frac{2^{(d-1)k+j}}{2^{2dk}}\sum_{i\in\I_j}\int_{Q_i}|b_i(y)|dy\\
  &\lesssim 2^{(d-1)k+j}\sum_{i\in\I_j}\frac{\un_{|x-c_i|>2^{k-1}}}{|x-c_i|^{2d}}\int_{Q_i}|f(y)|dy.
  \end{align*}
Here we have used the fact the intervals $I$ in $\S_k$ are disjoint subintervals of $(2^k,\, 2^{k+1}]$. So
 $$\sum_{I\in\S_k}\big(r(I)-l(I)\big)\le 2^k.$$
Therefore, we obtain the announced estimate of $g_{k,j}(x)$. Along with Lemma~\ref{AOL}, this yields
 \begin{align*}
 &w(\{x \in \wt\O^c: \sum_{I\in\S}\big|\sum_{i\in\I(I)}A_{I}(b_i)(x)\big|^2 >\frac1{12} \})\\
 &\quad\lesssim  \int_{\wt\O^c}\sum_{k}\big(\sum_jg_{k,j}(x)\big)^2w(x)dx\le\int_{\real^d}|f(y)|w(y)dy.
  \end{align*}
This is the desired estimate for the short variation. Thus we have proved the weak type $(1,1)$ inequality of Theorem~\ref{vq D}. \cqd


\section{Proof of Theorem~\ref{vq vector}}


This section is devoted to the proof of Theorem~\ref{vq vector}. We will only consider the case of singular integrals, that of the differential operators being handled in a similar way.

First note that Theorem~\ref{vq vector} clearly holds for $p=\rho$ thanks to Theorem~\ref{vq CZ}. Then we deduce the type  $(p, p)$ inequality for any $1<p<\8$ by extrapolation techniques  described in sections~IV.5 and V.6 of \cite{gar-rubio} (see, in particular, Remark~V.6.5 there).

We are thus left to prove the weak type $(1, 1)$ inequality. This proof is similar to but more complicated than that of the same inequality in the scalar case presented in section~\ref{pf of weakA}.  We will follow the structure of that proof. The steps~1-5 mentioned in the sequel are those in section~\ref{pf of weakA}.

Let $f :\real^d\to\el^\rho$ be a compactly supported integrable function, so $f=\{f_n\}_n$. Let $\f$ be the function given by $\f(x)=\|f(x)\|_\rho$ (the norm here is, of course, that of $\el^\rho$). We now apply Lemma~\ref{CZD} to $\l=2$ and $\f$, and keep all notation introduced in  section~\ref{pf of weakA}. In particular, $f$ is written as the sum of its good and bad parts: $f=g+b$. Both $g$ and $b$ take values in $\el^\rho$. We set $g=\{g_n\}_n$,  $b=\{b_n\}_n$ and $b_i=\{b_{n,i}\}_n$. Note that each $b_{n,i}$ is supported on the cube $Q_i$ and its mean vanishes.

We must show
  \begin{align*}
  w\big(\big\{x\in\real^d: \sum_n\big(\V_q\K(g_n)(x)\big)^\rho>1\big\}\big)
  &\lesssim\int_{\real^d}\|f(x)\|_\rho w(x)dx,\\
  w\big(\big\{x\in\real^d: \sum_n\big(\V_q\K(b_n)(x)\big)^\rho>1\big\}\big)
  &\lesssim\int_{\real^d}\|f(x)\|_\rho w(x)dx.
  \end{align*}
The first inequality on $g$ is proved by the type $(\rho, \rho)$ of $\V_q\K$ already observed above. The measure on the left hand side of the second one is split into two parts, one  on $\wt\O$ and another on $\wt\O^c$. The part on $\wt\O$ is estimated as in section~\ref{pf of weakA}. Thus it remains to show
 \beq\label{b out}
 w\big(\big\{x\in\wt\O^c: \sum_n\big(\V_q\K(b_n)(x)\big)^\rho>1\big\}\big)
  \lesssim\int_{\real^d}\|f(x)\|_\rho w(x)dx.
  \eeq
To this end we will follow steps~1-5  and indicate only the necessary modifications.

\medskip

As in step~1,  for every $x\notin \wt\O$ and $n$ choose an increasing sequence $\{t_{n,j}\}$ such that
 $$\V_{q}\K(b_n) (x)\le 2\big(\sum_j\big|K_{(t_{n,j},\,t_{n,j+1}]}(b_n)(x)\big|^q\big)^{1/q}.$$
Let $I_{n,j}=(t_{n,j},\,t_{n,j+1}]$. Choose $r$ such that $1<r\le\min(q,\,\rho)$. Then
 \begin{align*}
 \Big(\sum_n\big[\sum_j\big|K_{I_{n,j}}(b_n)(x)\big|^q\big]^{\rho/q}\Big)^{1/\rho}
 &\le \Big(\sum_n\big[\sum_{j}\big|\sum_{i\in\I_1(I_{n,j})}K_{I_{n,j}}(b_{n,i})(x)\big|^q\big]^{\rho/q}\Big)^{1/\rho}\\
 &\quad+ \Big(\sum_n\big[\sum_{j}\big|\sum_{i\in\I_2(I_{n,j})} K_{I_{n,j}}(b_{n,i})(x)\big|^q\big]^{\rho/q}\Big)^{1/\rho}\\
 &\le \Big(\sum_n\big[\sum_{j}\big|\sum_{i\in\I_1(I_{n,j})}K_{I_{n,j}}(b_{n,i})(x)\big|\big]^{\rho}\Big)^{1/\rho}\\
 &\quad+ \Big(\sum_n\big[\sum_{j}\big|\sum_{i\in\I_2(I_{n,j})} K_{I_{n,j}}(b_{n,i})(x)\big|^r\big]^{\rho/r}\Big)^{1/\rho}\\
 &\;{\mathop =^{\rm def}}\; A(x)+B(x).
 \end{align*}
Thus \eqref{b out} is reduced to
 \beq\label{interior}
 w\big(\big\{x\in\wt\O^c: A(x)>\frac12\big\}\big)
  \lesssim\int_{\real^d}\|f(x)\|_\rho w(x)dx.
  \eeq
 and
 \beq\label{boundary}
 w\big(\big\{x\in\wt\O^c: B(x)>\frac12\big\}\big)
  \lesssim\int_{\real^d}\|f(x)\|_\rho w(x)dx.
  \eeq
  By step~2, we have
  $$
 \sum_{j}\big|\sum_{i\in\I_1(I_{n,j})}K_{I_{n,j}}(b_{n,i})(x)\big|
 \lesssim \sum_i\frac{l_i^\d}{|x-c_i|^{d+\d}}\int_{Q_i}|f_n(y)|dy.$$
Thus by the Minkowski inequality, 
  \begin{align*}
  A(x)
 \lesssim\sum_i\frac{l_i^\d}{|x-c_i|^{d+\d}}\int_{Q_i}\|f(y)\|_\rho dy
 \end{align*}
As in step~2, we then deduce \eqref{interior}.

The proof of \eqref{boundary} on the boundary sum is more complicated.
As in step~3, for each $n$ we separate the boundary sum into the long and short variations:
 \begin{align*}
 \big(\sum_{j}\big|\sum_{i\in\I_2(I_{n,j})}K_{I_{n,j}}(b_{n,i})(x)\big|^r\big)^{1/r}
 &\le 3^{1/r'} \big(\sum_{I\in\L_n}\big|\sum_{i\in\I_2(I)}K_{I}(b_{n,i})(x)\big|^r\big)^{1/r}\\
  &+ 3^{1/r'}\big( \sum_{I\in\S_n}\big|\sum_{i\in\I_2(I)}K_{I}(b_{n,i})(x)\big|^r \big)^{1/r}.
  \end{align*}
Accordingly,
 $$
 B(x)\le 3^{1/r'}\big( B_1(x)+B_2(x)\big),$$
 where
 \begin{align*}
   B_1(x)
   &=\Big(\sum_n \big[\sum_{I\in\L_n}\big|\sum_{i\in\I_2(I)}K_{I}(b_{n,i})(x)\big|^r\big]^{\rho/r}\Big)^{1\rho}\\
  B_2(x)
  &=\Big(\sum_n \big[\sum_{I\in\S_n}\big|\sum_{i\in\I_2(I)}K_{I}(b_{n,i})(x)\big|^r\big]^{\rho/r}\Big)^{1\rho}.
  \end{align*}
 Thus \eqref{boundary}  is split into two inequalities
  \beq\label{boundary1}
  w\big(\big\{x\in\wt\O^c: B_1(x)>\frac1{4\cdot 3^{1/r'}}\big\}\big)
  \lesssim\int_{\real^d}\|f(x)\|_\rho w(x)dx.
  \eeq
 and
 \beq\label{boundary2}
 w\big(\big\{x\in\wt\O^c: B_2(x)>\frac1{4\cdot 3^{1/r'}}\big\}\big)
  \lesssim\int_{\real^d}\|f(x)\|_\rho w(x)dx.
  \eeq

 We now apply \eqref{long} to each $\L_n$ (with $r$ instead of $2$). Let $\I^n_{2,k}$ be the subset associated to $\L_n$ defined just before \eqref{long}. Then we have
 \begin{align*}
  \sum_{I\in\L_n}\big|\sum_{i\in\I_2(I)}K_{I}(b_{n,i})(x)\big|^r
  &\le 2^{r-1}\sum_{k\in\ent}\big|\sum_{i\in\I^n_{2,k}}K_{D_k}(b_{n,i})(x)\big|^r\\
   &\le 2^{r-1}\sum_{k\in\ent}\big(\sum_j\sum_{i\in\I^n_{2,k}\cap\I_j}\big|K_{D_k}(b_{n,i})(x)\big|\big)^r.
   \end{align*}
Therefore, by the Minkowski inequality (recalling that $r\le\rho$),
 \begin{align*}
   B_1(x)
   &\lesssim\Big(\sum_{k\in\ent}\big(\sum_j\big[\sum_n\sum_{i\in\I^n_{2,k}\cap\I_j}
   \big|K_{D_k}(b_{n,i})(x)\big|^{\rho}\big]^{1/\rho}\big)^r\Big)^{1/r}\\
   &=\Big(\sum_{k\in\ent}\big(\sum_jh_{k,j}(x)\big)^r\Big)^{1/r},
   \end{align*}
where
 $$h_{k,j}(x)=\big[\sum_n\sum_{i\in\I^n_{2,k}\cap\I_j}  \big|K_{D_k}(b_{n,i})(x)\big|^{\rho}\big]^{1/\rho}.$$
Hence
 \begin{align*}
 w\big(\big\{x\in\wt\O^c: B_1(x)>\frac1{4\cdot 3^{1/r'}}\big\}\big)
  \lesssim   \int_{\wt\O^c}\sum_{k\in\ent}\big(\sum_jh_{k,j}(x)\big)^rw(x)dx.
 \end{align*}
However,
  \begin{align*}
   h_{k,j}(x)
  &\lesssim \frac1{2^{dk}}\big[\sum_n\big(\sum_{i\in\I_{2,k}^n\cap\I_j}\int_{Q_i}|b_{n,i}(y)|dy\big)^\rho\big]^{1/\rho}\\
  &\le\frac1{2^{dk}}\big[\sum_n\big(\sum_{i\in\I_{2}(D_k)\cap\I_j}\int_{Q_i}|b_{n,i}(y)|dy\big)^\rho\big]^{1/\rho}\\
  &\le\frac1{2^{dk}}\sum_{i\in\I_{2}(D_k)\cap\I_j}\int_{Q_i}\|b_{i}(y)\|_\rho dy.
  \end{align*}
We can now apply the argument in step~4 to get the following inequality
  $$\int_{\wt\O^c}h_{k,j}(x)^rw(x)dx\lesssim 2^{(r-1)(j-k)}\Delta_j,\quad k>j,$$
 where
  $$\Delta_j=\sum_{i\in\I_j}\int_{Q_i}\|f(y)\|_\rho w(y)dy.$$
Therefore,  by Lemma~\ref{AOL}, we deduce
 $$\int_{\wt\O^c}\sum_{k\in\ent}\big(\sum_jh_{k,j}(x)\big)^rw(x)dx
 \lesssim \sum_j\Delta_j\le\int_{\real^d}\|f(y)\|_\rho w(y)dy.$$
Combining the preceding inequalities, we get \eqref{boundary1}.

To complete the proof of  Theorem~\ref{vq vector}, it only remains to show \eqref{boundary2}. It is this part that had prevented us from completing the paper since its first draft was written in the fall of 2012. We are very grateful to Guixiang Hong who kindly communicated to us the following argument, inspired by \cite{kzk}. 

By the weak type $(1, 1)$ of the Hardy-Littlewood maximal function  with respect to $A_1$ weights,  \eqref{boundary2} will follow from the following inequality
 \beq\label{boundary2bis}
  \int_{\wt\O^c}B_2(x)^rw(x)dx\lesssim\sum_i w(Q_i).
  \eeq
Let $\S_{n,k}=\{I\in\S_n: I\subset(2^k,\, 2^{k+1}]\}$. As already observed before, for $I\in\S_{n,k}$ and $i\in\I_2(I)\cap\I_j$,  we have that $k>j$ and  $2^k<d(x, \,Q_i)<2^{k+1}$. Now choose $\a$ such that $(d-1)/r'<\a<d/r'$. Then for any $I\in\S_{n,k}$, by the H\"older inequality,
  $$
  \big|\sum_{i\in\I_2(I)\cap\I_j}K_{I}(b_{n,i})(x)\big|^r
  \le \big(\sum_{j<k}\sum_{i\in\I_2(I)\cap\I_j}2^{(k-j)\a r} |K_{I}(b_{n,i})(x)|^r\big)\,\big(\sum_{j<k}\sum_{i\in\I_2(I)\cap\I_j}2^{(j-k)\a r'} \big)^{\frac{r}{r'}}
  $$
For every $i\in\I_2(I)\cap\I_j$, $Q_i$ is a dyadic cube of side length $2^j$ and intersects the boundary of the annulus $x+R_I$; so the number of a disjoint collection of such cubes is controlled by $2^{(d-1)(k-j)}$. Thus by the choice of $\a$, the second double series on the above right-hand side  is convergent. Therefore, letting $U_k=\{i\,:\, l_i<2^k,\, d(x, \,Q_i)<2^{k+1}\}$, we then deduce
 \begin{align*}
 \sum_{I\in\S_{n,k}}\big|\sum_{i\in\I_2(I)\cap\I_j}K_{I}(b_{n,i})(x)\big|^r
 &\lesssim \sum_{I\in\S_{n,k}}\sum_{j<k}\sum_{i\in\I_2(I)\cap\I_j}2^{(k-j)\a r} |K_{I}(b_{n,i})(x)|^r\\
 &\lesssim \sum_{i\in U_k} l_i^{-\a r}\,2^{k\a r} \sum_{I\in\S_{n,k}} |K_{I}(b_{n,i})(x)|^r\\
 &\lesssim \sum_{i\in U_k} l_i^{-\a r}\,2^{k(\a-d) r} \Big(\int_{Q_i}|b_{n,i}(y)|dy\Big)^r\,.
  \end{align*}
Then by the Minkowski inequality and Lemma~\ref{CZD}, we have
 \begin{align*}
 B_2(x)^r
 &\lesssim \sum_k \sum_{i\in U_k} l_i^{-\a r}\,2^{k(\a-d) r} \Big(\int_{Q_i}\|b_{i}(y)\|_{\rho}dy\Big)^r\\
 &\lesssim \sum_k \sum_{i\in U_k} l_i^{(d-\a) r}\,2^{k(\a-d) r}\,.
  \end{align*}
Using the properties of $A_1$ weights and by the choice of $\a$, we finally get
 \begin{align*}
 \int_{\wt\O^c}B_2(x)^rw(x)dx
 &\lesssim \sum_{i} l_i^{(d-\a) r}\sum_{k: 2^k>l_i} 2^{k(\a-d) r}  \int_{\wt\O^c\cap\{x:d(x,\, Q_i)<2^{k+1}\}} w(x)dx\\
 &\lesssim \sum_{i} l_i^{(d-\a) r}\sum_{k: 2^k>l_i} 2^{k(\a-d) r} \,2^{kd} \inf_{x\in Q_i} w(x)\\
 &\lesssim \sum_{i} l_i^{(d-\a) r}\, l_i^{(\a-d) r+d}  \inf_{x\in Q_i} w(x)\\
 &\lesssim \sum_{i} l_i^{d}  \inf_{x\in Q_i} w(x)\le \sum_{i} w(Q_i).
  \end{align*}
This is the desired inequality \eqref{boundary2bis}. So Theorem~\ref{vq vector} is completely proved. \cqd

\medskip

\noindent{\bf Acknowledgments.} We are very grateful to Guixiang Hong for helping us to fix up the proof of the weak type $(1, 1)$ inequality in Theorem~\ref{vq vector}. We also acknowledge the financial supports of  ANR-2011-BS01-008-01, NSFC grant (No. 11271292, 11301401 and 11431011) and MTM2011-28149-C02-01.

\bigskip


\end{document}